\numberwithin{equation}{section}
\newtheorem{theorem}{Theorem}[section]
\newtheorem{lemma}[theorem]{Lemma}
\newtheorem{definition}[theorem]{Definition}
\newtheorem{proposition}[theorem]{Proposition}
\newtheorem{remark}[theorem]{Remark}
\begin{document}
	
	\title[\hfil On the solutions of nonlocal 1-Laplacian equation with $L^1$-data] {On the solutions of nonlocal 1-Laplacian equation with $L^1$-data}
	
		\author[D. Li and C. Zhang  \hfil \hfilneg]
	{Dingding Li  and Chao Zhang$^*$}
	
	\thanks{$^*$ Corresponding author.}
	
	\address{Dingding Li \hfill\break School of Mathematics, Harbin Institute of Technology, Harbin 150001, China}
	\email{}

	\address{Chao Zhang  \hfill\break School of Mathematics and Institute for Advanced Study in Mathematics, Harbin Institute of Technology, Harbin 150001, China}
	\email{}

	\subjclass[2020]{35D30, 35R11}
	\keywords{Renormalized solutions; existence; nonlocal $1$-Laplace operator; $L^1$-data}
	
	\maketitle
	
\begin{abstract}
	We study the solutions to a nonlocal 1-Laplacian equation given by 
		\begin{align*}
	2\text{P.V.}\int_{\mathbb{R}^N}\frac{u(x)-u(y)}{|u(x)-u(y)|} \frac{dy}{|x-y|^{N+s}}=f(x) \quad \textmd{for } x\in \Omega,
	\end{align*}
with Dirichlet boundary condition $u(x)=0$ in $\mathbb R^N\backslash \Omega$ and nonnegative $L^1$-data. By investigating the asymptotic behaviour of renormalized solutions $u_p$ to the nonlocal $p$-Laplacian equations as $p$ goes to $1^+$, we introduce a suitable definition of solutions and prove that  the limit function $u$ of $\{u_p\}$ is a solution of the nonlocal $1$-Laplacian equation above. 
	\end{abstract}

\section{Introduction}
\thispagestyle{empty}
\label{sec1}

Let $\Omega\subset \mathbb R^N$ be a smooth bounded domain. In this paper, we consider the following nonlocal 1-Laplacian Dirichlet problem:
\begin{align}
		\label{1.1}
		\begin{cases}
			\mathcal{L}_1u=f &\text{in }\Omega,\\
			u=0 &\text{in }\mathbb{R}^N\backslash\Omega,
		\end{cases}
\end{align}
where $0\le f\in L^1(\Omega)$ and the operator $\mathcal{L}_p$ is given by
\begin{align*}
		\mathcal{L}_pu(x):=&(-\Delta)^s_pu(x)\\
		=&2\text{P.V.}\int_{\mathbb{R}^N}\frac{|u(x)-u(y)|^{p-2}(u(x)-u(y))}{|x-y|^{N+sp}}dy, \quad x\in \Omega,
\end{align*}
with $p\in [1,+\infty)$, $s\in (0,1)$ and \text{P.V.} being a commonly used abbreviation in the principal value sense.
	
The operator $\mathcal{L}_p$, being known as the celebrated fractional (nonlocal) $p$-Laplacian, emphasizes the L\'{e}vy process which indicates the emergence of the jump diffusion. In recent years, with the development of  fractional Sobolev space,  there is an extensive literature related to nonlocal $p$-Laplacian equations.  Existence, uniqueness, regularity and the properties of solutions are treated under different situations; see \cite{AAB10,KPU11,LPPS15,AAB19,KMS15} and references therein.
	
Problem \eqref{1.1} can be seen as a nonlocal counterpart of the following $1$-Laplacian Dirichlet problem:
\begin{align}
	\label{1.4}
	\begin{cases}
	\displaystyle
		-\Delta_1u:=-\text{div}\left( \frac{\nabla u}{|\nabla u|}\right) =f&\text{in }\Omega,\\
		u=0&\text{on }\partial\Omega.
	\end{cases}
\end{align}
This equation has been usually addressed as the limit problem for $p$-Laplacian type equations
when $p$ goes to $1^+$. When the right-hand side term $f$ belongs to different settings, it was studied by Kawohl in \cite{Ka90} for constant data, 
by Cicalese and Trombetti in \cite{CT03} for data belonging to the Lorentz space $L^{N,\infty}(\Omega)$ and by Mercaldo, Segura de Le\'{o}n and Trombetti in \cite{MST08} for general data belonging to $W^{-1,\infty}(\Omega)$ (see also \cite{De99} for the case with critical Sobolev exponent). In the framework of $L^1$ setting, since no almost everywhere finite solution can be expected, Mercaldo, Segura de Le\'{o}n S and Trombetti \cite{MST09} investigated the behaviour, as $p$ goes to $1^+,$ of the renormalized solutions to the $p$-Laplacian problems and proved the existence of renormalized solutions.

The notion of renormalized solution was first introduced by DiPerna and Lions in the study of the Boltzmann equation in \cite{DL89}. There are plenty of results on renormalized solutions of $p$-Laplacian type equations based on the existence of weak solutions. For the nonlinear elliptic problem:
\begin{align}
	\label{1.3}
	\begin{cases}
		-\text{div}(a(x,\nabla u))=f&\text{in }\Omega,\\
		u=0&\text{on }\partial\Omega,
	\end{cases}
\end{align}
under the assumption that $p>2-\frac{1}{N}$, even in the case of the data is a measure, Boccardo and Gallou\"{e}t gave the existence of solutions in the sense of distribution and proved that  the renormalized solution of \eqref{1.3} is a distributed solution in \cite{BG89,BG92}. Maso, Murat, Orsina and Prignet showed in \cite{DMOP99} that, the restriction on $p$ could be eliminated, that is the renormalized solution of \eqref{1.3} is a distributed solution for all $p>1$. At the same time, the study of renormalized solutions was also adapted to general nonlinear elliptic or parabolic problems \cite{BM97, BM01, BR98,BGD93,DMOP99,ZS10}.  

As far as the parabolic $1$-Laplacian operator are regarded, the pioneering works can be found in a series of papers by Andreu, Ballesteler, Caselles and Maz\'{o}n \cite{ABCM00, ABCM01, ABCM012, ACM04}. Indeed, in \cite{ABCM01}, the authors characterized the imprecise quotient $\frac{\nabla u}{|\nabla u|}$, through the Anzellotti-Frid-Chen's Pairing Theory (see \cite{An83, CF99}).  This theory allows them to introduce a vector field $z\in L^\infty(\Omega, \mathbb R^N)$ which plays the role of $\frac{\nabla u}{|\nabla u|}$. We also remark that in \cite{LS22}, the authors investigated the  inhomogeneous $1$-Laplace evolution equation in another way.

Finally, we would like to turn to the nonlocal equations. To the best of our knowledge, it seems that there are few results on this issue. In particular, Andreu, Maz\'{o}n, Rossi and Toledo \cite{AMRT09, AMRT08} considered the following nonlocal diffusion equation with different boundary conditions
	$$
	u_t(x,t)=\int_{\mathbb R^n} J(x-y)|u(t,y)-u(t,x)|^{p-2}(u(t,y)-u(t,x))\,dy,
	$$
	where $J:\mathbb R^N\to \mathbb R$ is a nonnegative, radial, continuous function, strictly positive in $B(0,1)$, vanishing in $\mathbb R^N \backslash B(0,1)$ and such that $\int_{\mathbb R^N} J(z)\,dz=1$. The authors  studied $1<p<+\infty$ as well as the extreme cases $p=1$
 and limits $p\to +\infty$, see also \cite{MRT19}. Novaga and Onoue \cite{MF23}  obtained the regularity of solutions to a nonlocal variational problem related to the image denoising model:
	$$
	\min\{\mathcal F_{K,f}(u): u \in BV_K(\mathbb R^n)\cap L^2(\mathbb R^n)\},
	$$
	where the functional $\mathcal F_{K,f}$ is defined as
	$$
	\mathcal F_{K,f}(u):=\frac 12\int_{\mathbb R^n}\int_{\mathbb R^n}  K(x-y)|u(x)-u(y)|
\,dxdy+\frac 12 \int_{\mathbb R^n}(u(x)-f(x))^2\,dx,
	$$
	the function $K$ is a kernel singular at the origin, and a typical example is the function $|x|^{-(N+s)}$.
The authors showed that, in two dimensions, minimizers have the same H\"{o}lder regularity as the original image. We also refer to \cite{BR18,KRB17,ZZ20} for the nonlocal equations with variable exponents.

Concerning the nonlocal $1$-Laplacian equation \eqref{1.1}, the existence of $(s,1)$-harmonic function and the equivalence between those functions and minimizers were investigated in \cite{BU21}. Moreover, under the assumption that $f$ belongs to a certain closed ball in $L^\frac{N}{s}(\Omega)$, Bucur  in \cite{BU23} studied the minimizer of $\mathcal{F}^{s_p}_p$ (which is equivalent to the weak solution of nonlocal $p$-Laplacian)
\begin{align*}
	\mathcal{F}^{s_p}_p(u):=\frac{1}{2p}\int\int_{\mathbb{R}^{2N}}\frac{|u(x)-u(y)|^p}{|x-y|^{N+s_pp}}dxdy-\int_{\Omega}fu\,dx,
\end{align*}
where $p$ is small enough and $s_p=N+s-\frac{N}{p}\in (s,1)$, and obtained the existence, asymptotics and flatness results.

Motivated by the paper \cite{MST09}, our main aim is to deal with the solutions to the nonlocal 1-Laplacian equation with homogeneous Dirichlet boundary condition \eqref{1.1} and nonnegative $L^1$-data. To be more precise, we concentrate on the behaviour of the renormalized solutions $u_p$ when $p$ goes to $1^+$, and prove that, up to a subsequence, 
\begin{align*}
&u_p\rightarrow u \quad \text{a.e. in }\Omega,
\end{align*}
\begin{align*}
&\frac{|u_p(x)-u_p(y)|^{p-2}\left( u_p(x)-u_p(y)\right) }{|x-y|^{\left( \frac{N}{p}+s\right)(p-1) }}\rightharpoonup Z(x,y) \quad \text{in } L^q(A_\Omega)
\end{align*}
and
\begin{align*}
|u_p(\cdot)|^{p-2}u_p(\cdot)\rightharpoonup Z(\cdot,y) \quad\text{in }L^q(\Omega)
\end{align*}
for any fixed $y\in \mathbb{R}^N\backslash\Omega_0$,  where $1\le q<\frac{N}{N-s}$, $A_\Omega:=(\Omega\times\Omega_0)\cup(\Omega_0\times\Omega)$ and  
$\Omega_0:=\Omega+B_1(0)$ with $B_1(0)$ donating the unit ball in $\mathbb{R}^N$ centered on the origin. Different from the local case, the normal vector field $\frac{\nabla u}{|\nabla u|}$ in classical framework is taken by the fractional ratio $\frac{u(x)-u(y)}{|u(x)-u(y)|}$, which leads to the singular set is more difficult to interpret and describe.  After finding a suitable function $Z$ with  $Z\in \text{sgn}(u(x)-u(y))$, where sgn is the generalized sign function satisfying $\text{sgn}(0)=[-1,1]$, we show that this function $u$ is a solution to problem \eqref{1.1}. 
	
\medskip

To state our result, we introduce now  the notion of solution to such a problem in the sense of renormalization.
	
\begin{definition}
	\label{def1.1}
	A measurable function $u:\mathbb{R}^N\rightarrow\bar{\mathbb{R}}$ is said to be a solution to problem \eqref{1.1} if the following conditions hold:
	\begin{itemize}
	\item [(1)] $T_k(u)\in W^{s,1}_0(\Omega)$ for any fixed $k>0;$
	\item [(2)] There exists a function $Z$ such that $Z_k\in L^\infty(\mathbb{R}^N\times\mathbb{R}^N)$ and $\|Z_k\|_{L^\infty(\mathbb{R}^N\times\mathbb{R}^N)}\le 1,$ where $Z_k$ is defined as
	\begin{align*}
		Z_k:=Z\mathcal{X}_{\left\lbrace (x,y)\in \mathbb{R}^N\times\mathbb{R}^N:u(x)<k\text{ and }u(y)<k\right\rbrace};
	\end{align*}
	\item [(3)]For fixed $k>0$, there holds
	\begin{align*}
	Z\left( u(x)-u(y)\right)=\left| u(x)-u(y)\right|  
	\end{align*}
		in the domain
	\begin{align*}
	\left\lbrace (x,y)\in\mathbb{R}^N\times\mathbb{R}^N:u(x)<k\text{ and }u(y)<k\right\rbrace;
	\end{align*}
	\item [(4)] $Z$ is an anti-symmetry function, that is
	\begin{align*}
	Z(x,y)=-Z(y,x)
	\end{align*}
	for all $(x,y)\in \mathbb{R}^N\times\mathbb{R}^N;$
	\item [(5)] For all $\varphi\in W^{1,\infty}_0(\Omega)$ and $S\in W^{1,\infty}(\mathbb{R})$ with compact support, satisfying $S(t)$ is monotone in $(0,+\infty)$, $S'(t)\varphi(x)\ge 0$ for all $t\ge0$ and $x\in\Omega$, there holds
	\begin{align}
	\label{4.2}
	\int_{\mathcal{C}_\Omega}Z_{\infty}\frac{\left( S(u)\varphi\right)(x)-\left(S(u)\varphi\right)(y) }{|x-y|^{N+s}}dxdy\le \int_{\Omega}fS(u)\varphi dx,
	\end{align}
	where $Z_\infty=Z\mathcal{X}_{\left\lbrace (x,y)\in \mathbb{R}^N\times\mathbb{R}^N:u(x)<+\infty\ and\ u(y)<+\infty\right\rbrace }$ and
	\begin{align*}
	\mathcal{C}_\Omega:=(\mathbb{R}^N\times\mathbb{R}^N)\backslash\left[ (\mathbb{R}^N\backslash\Omega)\times(\mathbb{R}^N\backslash\Omega)\right].
	\end{align*}
	\end{itemize}
\end{definition}

Next we introduce our main result as follows:

\begin{theorem}
		\label{the1.2}
		There exists a measurable function $u$ such that it is a solution to problem \eqref{1.1} in the sense of Definition \ref{def1.1}.
\end{theorem}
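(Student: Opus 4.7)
The plan is to construct $u$ as the almost-everywhere limit of a subsequence of renormalized solutions $u_p$ to the nonlocal $p$-Laplacian problem $\mathcal{L}_p u_p=f$ in $\Omega$, $u_p=0$ in $\mathbb R^N\setminus\Omega$, as $p\to 1^+$, and then verify each of the five clauses of Definition~\ref{def1.1} for $u$. Testing the renormalized formulation of the $p$-problem against $T_k(u_p)$ yields the basic energy inequality
\begin{align*}
\int\int_{\mathbb R^{2N}}\frac{|T_k(u_p)(x)-T_k(u_p)(y)|^{p}}{|x-y|^{N+sp}}\,dx\,dy\le k\|f\|_{L^1(\Omega)}.
\end{align*}
H\"older's inequality, combined with the integrability of $|x-y|^{-(N+s)(p-1)/p}$ on bounded subsets (which uses $s<1$), converts this into a uniform bound for $T_k(u_p)$ in $W^{s,1}_0(\Omega)$ as $p\to 1^+$; a Boccardo--Gallou\"{e}t type argument on super-level sets of $u_p$ then gives a uniform Marcinkiewicz estimate for $u_p$ in $M^{N/(N-s)}(\Omega)$. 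Along a subsequence one therefore has $u_p\to u$ a.e.\ in $\mathbb R^N$ and $T_k(u_p)\rightharpoonup T_k(u)$ in $W^{s,1}_0(\Omega)$, which is clause~(1).

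Second, I would analyse the nonlinear quantity
\begin{align*}
A_p(x,y):=\frac{|u_p(x)-u_p(y)|^{p-2}(u_p(x)-u_p(y))}{|x-y|^{(N/p+s)(p-1)}},
\end{align*}
for which the identity $|A_p|^{p/(p-1)}=|u_p(x)-u_p(y)|^{p}/|x-y|^{N+sp}$, together with the energy bound and H\"older's inequality, produces a uniform $L^q(A_\Omega)$ estimate whenever $1\le q<N/(N-s)$. Extracting a further subsequence gives $A_p\rightharpoonup Z$ in $L^q(A_\Omega)$; the anti-symmetry of each $A_p$ passes to the weak limit and delivers clause~(4), and the parallel argument restricted to a fixed $y\in\mathbb R^N\setminus\Omega_0$, where $|x-y|$ stays bounded away from $0$, yields $|u_p|^{p-2}u_p\rightharpoonup Z(\cdot,y)$ in $L^q(\Omega)$.

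The main obstacle is the identification required by clauses~(2) and~(3): $Z$ must be an $L^\infty$ selection of $\mathrm{sgn}(u(x)-u(y))$ on $\{(x,y):u(x)<k,\,u(y)<k\}$. After truncating by $T_k(u_p)$, one has the pointwise control $|A_p|\le (2k)^{p-1}|x-y|^{-(N/p+s)(p-1)}$, and since $(N/p+s)(p-1)\to 0$ the right-hand side converges locally uniformly to $1$; weak-$*$ lower semicontinuity then forces $\|Z_k\|_{L^\infty}\le 1$, which is clause~(2). Clause~(3) follows by splitting $A_\Omega$ into the region $\{u(x)\ne u(y)\}$, where the a.e.\ convergence $u_p\to u$ combined with dominated convergence gives the pointwise limit $A_p\to (u(x)-u(y))/|u(x)-u(y)|$ and identifies $Z$ there by uniqueness of weak limits, and $\{u(x)=u(y)\}$, where the product $Z(u(x)-u(y))$ vanishes tautologically. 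Finally, for clause~(5) I would use $S(u_p)\varphi$ as a test function in the renormalized formulation of the $p$-problem and rewrite the resulting bilinear form as the $L^q$--$L^{q'}$ pairing
\begin{align*}
A_p\cdot\frac{(S(u_p)\varphi)(x)-(S(u_p)\varphi)(y)}{|x-y|^{N/p+s}},
\end{align*}
so that the weak convergence of $A_p$ combines with the strong $L^{q'}$ convergence of the second factor (which follows from $\varphi\in W^{1,\infty}_0(\Omega)$, $S\in W^{1,\infty}$ with compact support, and dominated convergence) to pass to the limit. The hypotheses that $S$ is monotone on $(0,+\infty)$ and $S'(t)\varphi(x)\ge 0$ control the sign of the remainder generated by the truncation in the renormalized formulation, yielding the inequality~\eqref{4.2}.
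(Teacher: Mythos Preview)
Your overall strategy matches the paper's, and clauses (1), (3), (4) are essentially fine, but the argument for clause (5) has a genuine gap. You write the renormalized identity as the pairing of $A_p$ against $\frac{(S(u_p)\varphi)(x)-(S(u_p)\varphi)(y)}{|x-y|^{N/p+s}}$ and assert that this second factor converges \emph{strongly} in $L^{q'}$. It does not. Split it into a $\varphi$-piece and an $S(u_p)$-piece. The $\varphi$-piece $\frac{\varphi(x)-\varphi(y)}{|x-y|^{N/p+s}}$ belongs to $L^\rho(A_\Omega)$ only for $\rho<\frac{N}{N+s-1}$, so the pairing would require $A_p$ bounded in $L^q$ with $q>\frac{N}{1-s}$; your global Marcinkiewicz estimate gives only $q<\frac{N}{N-s}$, and $\frac{N}{N-s}<\frac{N}{1-s}$ for $N>1$. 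The paper fixes this by observing that on the truncated set $\{u_p(x)<k,\,u_p(y)<k\}$ the energy bound actually controls $A_p$ in $L^r$ for every finite $r$ (the available exponent $p/(p-1)\to\infty$), which then pairs with the $\varphi$-piece. The $S(u_p)$-piece is worse: $\frac{S(u_p)(x)-S(u_p)(y)}{|x-y|^{N/p+s}}$ involves the rough function $u_p$ and has no strong $L^{q'}$ convergence whatsoever; dominated convergence is unavailable. The paper handles this term by rewriting its contribution as $\frac{|u_p(x)-u_p(y)|^p}{|x-y|^{N+sp}}\,S'(\theta)\,\frac{\varphi(x)+\varphi(y)}{2}$, invoking the sign hypothesis $S'(t)\varphi(x)\ge 0$ to make the integrand nonnegative, and applying Fatou's lemma. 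This is exactly why \eqref{4.2} is an \emph{inequality} and why the monotonicity/sign hypotheses in Definition~\ref{def1.1}(5) are there; your last sentence misidentifies their role as controlling a ``truncation remainder''.

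Two smaller points. First, your H\"older argument for a uniform $W^{s,1}_0$ bound on $T_k(u_p)$ fails: any honest H\"older split that uses the energy $\int\int\frac{|T_k(u_p)(x)-T_k(u_p)(y)|^p}{|x-y|^{N+sp}}$ produces a residual kernel of exponent exactly $N$, which is not locally integrable; the paper gets only $W^{s',1}_0$ for $s'<s$ via Lemma~\ref{lem3.1} (for compactness) and then recovers $T_k(u)\in W^{s,1}_0$ by Fatou on the integrand. Second, in clause~(2) your pointwise bound $(2k)^{p-1}|x-y|^{-(N/p+s)(p-1)}$ is \emph{not} uniform in $p$ near the diagonal, so ``weak-$*$ lower semicontinuity'' does not apply directly; one must either restrict to $\{|x-y|>\delta\}$ and let $\delta\downarrow 0$, or follow the paper's decomposition into $B_{p,h,k}$ (large quotient, small measure) and its complement (bounded by $h^{p-1}\to 1$), then send $h\to\infty$.
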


\begin{remark}
We remark that we have to impose  the monotonicity and nonnegativity conditions in (5) of Definition \ref{def1.1} due the technical reason from the Fatou's lemma. Another approximation method has been proposed to study the nonlocal $1$-Laplacian in \cite{BU23}, which may be useful to solve this problem. However, there are still some difficulties in dealing with the $L^1$-data problem.
\end{remark}

The rest of this paper is organized as follows. In Section \ref{sec2}, we state some notations and useful tools regarding to the fractional Sobolev spaces and renormalized solutions. Section \ref{sec3} is devoted to studing the relationship between fractional Sobolev space with different parameters and the asymptotic behaviour of renormalized solutions $u_p$ to the nonlocal $p$-Laplacian equations as $p$ goes to $1^+$. We will give the proof of Theorem \ref{the1.2}  in Section \ref{sec4}.

\section{ Preliminaries}
\label{sec2}
	
In this section, we introduce some notation used later. We will say that $\left\lbrace u_p\right\rbrace $ is a sequence and consider subsequences of it. We denote by $|z|$ the Euclidean norm of $z\in \mathbb{R}^N$, $\text{meas}\left(  E\right)  $ donates the Lebesgue measure of a measurable subset $E$ of $\mathbb{R}^N$.
	
For $1<q<+\infty$, the Lorentz space $L^{q,\infty}(\Omega)$ is the space of Lebesgue measurable functions $u$ such that
	\begin{align*}
		\sup_{t>0}\left\lbrace 
		t \left( \text{meas}\left( \left\lbrace x\in\Omega:|u(x)|>t\right\rbrace \right) \right) ^\frac{1}{q}
		\right\rbrace <+\infty.
	\end{align*}\par 
For $0<\alpha<1$ and $1\le p<+\infty$, the fractional Sobolev space $W^{\alpha,p}(\mathbb{R}^N)$ is defined as
	\begin{align*}
		\left\lbrace 
		u\in L^p(\mathbb{R}^N):[u]_{W^{\alpha,p}(\mathbb{R}^N)}:=\left( \int_{\mathbb{R}^N\times\mathbb{R}^N}\frac{|u(x)-u(y)|^p}{|x-y|^{N+sp}}dxdy\right)^\frac{1}{p}<+\infty 
		\right\rbrace .
	\end{align*}
For a smooth bounded domain $\Omega\subset\mathbb{R}^N$, we denote by $W^{\alpha,p}_0(\Omega)$ a fractional Sobolev space, which is
	\begin{align*}
		W^{\alpha,p}_0(\Omega):=\left\lbrace 
		u\in W^{\alpha,p}(\mathbb{R}^N):u=0\ \text{a.e. in }\mathbb{R}^N\backslash\Omega
		\right\rbrace.
	\end{align*}
We refer to \cite{DPV12} for more information about the space $W^{\alpha,p}(\mathbb{R}^N)$.

From \cite{ZZ20}, we give the definition of renormalized solution of the following fractional $p$-Laplacian problem:
	\begin{align}
		\label{2.1}
		\begin{cases}
			\mathcal{L}_pu=f &\text{ in }\Omega,\\
			u=0 &\text{ in }\mathbb{R}^N\backslash\Omega,
		\end{cases}
	\end{align}
where $f$ is a nonnegative function belonging to $L^1(\Omega)$ and $p>1$.
	\begin{definition}
		\label{def2.1}
		$u_p$ is said to be a renormalized solution to problem \eqref{2.1} if $u_p$ satisfies:
		\begin{itemize}
			\item [(1)] For any fixed $k>0$, $T_k(u_p)\in W^{s,p}_0(\Omega)$, where the truncation function $T_k$ is defined by
		\begin{align*}
			T_k(t)=\max\left\lbrace -k,\min\left\lbrace k,t\right\rbrace \right\rbrace 
		\end{align*}
		for any $t\in \mathbb{R};$
		
		\item [(2)]
		\begin{align*}
			\lim\limits_{h\rightarrow+\infty}\int_{\left\lbrace (x,y)\in\mathcal{C}_\Omega:\left( u_p(x),u_p(y)\right)\in R_h \right\rbrace }
			\frac{|u(x)-u(y)|^{p-1}}{|x-y|^{N+sp}}dxdy=0,
		\end{align*}
		where
		\begin{align*}
			R_h=\left\lbrace 
			(v,w)\in \mathbb{R}^2:\max\left\lbrace |v|,|w|\right\rbrace\ge h+1\ \text{and }\min\left\lbrace |v|,|w|\right\rbrace \le h
			\right\rbrace ;
		\end{align*}\par 
		\item [(3)] For any $\varphi\in C^{\infty}_0(\Omega)$ and $S\in W^{1,\infty}(\mathbb{R})$ with compact support, there holds
		\begin{align*}
			\begin{aligned}
				&\quad\int_{\mathcal{C}_\Omega}\frac{|u_p(x)-u_p(y)|^{p-2}\left( u_p(x)-u_p(y)\right)\left[ \left( S(u_p)\varphi\right)(x)-\left( S(u_p)\varphi\right)(y) \right]  }{|x-y|^{N+sp}}dxdy\\
				&=\int_{\Omega}fS(u_p)\varphi dx.
			\end{aligned}
		\end{align*}
			\end{itemize}
	\end{definition}\par 

	\begin{remark}
		\label{rem2.2}
		In (3) of Definition \ref{def2.1}, it is enough for us to assume that $\varphi\in W^{s,p}(\Omega)\cap L^\infty(\Omega)$ and $S\in W^{1,\infty}(\mathbb{R})$ with compact support, such that	$S(u_p)\varphi\in W^{s,p}_0(\Omega)$,
		which could be proved by the method in the proof of \cite[Theorem 3.4]{ZZ20}.
	\end{remark}
	
	
Next, for the convenience of the readers, we give some lemmas that can be proved by following the arguments given in \cite{ZZ20}.

	\begin{lemma}
		\label{lem2.3}
		Let $u_p$ be a renormalized solution to problem \eqref{2.1}, then
		\begin{align*}
			T_k(u_p)\in W^{s,p}_0(\Omega)
		\end{align*}
		and for fixed $\delta,\gamma>0$, set 
		\begin{align*}
			S_{\delta,\gamma,h}(t)=T_\delta(\gamma G_h(t)),
		\end{align*} 
		where
		\begin{align*}
			G_h(t)=t-T_h(t),
		\end{align*} 
		we have
		\begin{align*}
			\lim\limits_{h\rightarrow\infty}\int_{\mathcal{C}_\Omega}
			\left[ S_{\delta,\gamma,h}(u_p)(x)-S_{\delta,\gamma,h}(u_p)(y)\right]\frac{|u_p(x)-u_p(y)|^{p-2}\left( u_p(x)-u_p(y)\right)}{|x-y|^{N+sp}}dxdy=0.
		\end{align*}
	\end{lemma}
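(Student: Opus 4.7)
The first assertion $T_k(u_p)\in W^{s,p}_0(\Omega)$ is contained in part~(1) of Definition~\ref{def2.1}, so all the work is in the limit identity. My plan is to reduce the limit statement to the single identity
\[
\int_{\mathcal{C}_\Omega}\bigl[S_{\delta,\gamma,h}(u_p)(x)-S_{\delta,\gamma,h}(u_p)(y)\bigr]\,\frac{|u_p(x)-u_p(y)|^{p-2}(u_p(x)-u_p(y))}{|x-y|^{N+sp}}\,dxdy=\int_\Omega f\,S_{\delta,\gamma,h}(u_p)\,dx,
\]
and then send $h\to\infty$ on the right-hand side only. Since $u_p$ is a.e.\ finite, $S_{\delta,\gamma,h}(u_p)\to 0$ pointwise a.e.\ as $h\to\infty$; combined with $|S_{\delta,\gamma,h}|\le \delta$ and $f\in L^1(\Omega)$, dominated convergence forces the right-hand side to zero, which is exactly the lemma's conclusion.

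To produce the identity I would test part~(3) of Definition~\ref{def2.1} with the constant function $\varphi\equiv 1$ (admissible by Remark~\ref{rem2.2}, since $S_{\delta,\gamma,h}(0)=0$) against a compactly supported approximation of $S_{\delta,\gamma,h}$. Concretely, pick a piecewise linear cutoff $\eta_M\in W^{1,\infty}(\mathbb{R})$ equal to $1$ on $[-M,M]$ and vanishing outside $[-M-1,M+1]$, and set $S^M:=\eta_M\cdot S_{\delta,\gamma,h}$. Then $S^M\in W^{1,\infty}(\mathbb{R})$ has compact support with $|S^M|\le\delta$ and $S^M(0)=0$, so Definition~\ref{def2.1}~(3) furnishes the identity above with $S^M$ in place of $S_{\delta,\gamma,h}$. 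Using the symmetric decomposition
\[
S^M(a)-S^M(b)=\tfrac{\eta_M(a)+\eta_M(b)}{2}\bigl[S_{\delta,\gamma,h}(a)-S_{\delta,\gamma,h}(b)\bigr]+\tfrac{S_{\delta,\gamma,h}(a)+S_{\delta,\gamma,h}(b)}{2}\bigl[\eta_M(a)-\eta_M(b)\bigr],
\]
the first piece has a nonnegative integrand (as $S_{\delta,\gamma,h}$ is nondecreasing, so $(S_{\delta,\gamma,h}(a)-S_{\delta,\gamma,h}(b))|a-b|^{p-2}(a-b)\ge 0$) with a weight increasing to $1$, so monotone convergence extracts the desired integral in the limit, while the right-hand side of the $S^M$-identity passes by dominated convergence.

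The main technical hurdle is the vanishing, as $M\to\infty$, of the residual cross term built from the second piece of the decomposition. I would split its support into three regions: the "mixed'' region $R_M$ from Definition~\ref{def2.1}~(2), on which the residual is pointwise dominated by $2\delta\,|u_p(x)-u_p(y)|^{p-1}/|x-y|^{N+sp}$ and is therefore killed directly by part~(2); the "both large'' region $\{|u_p(x)|,|u_p(y)|>M+1\}$, where $\eta_M\equiv 0$ makes the residual identically zero; and the narrow transition slice where at least one of $u_p(x),u_p(y)$ lies in $(M,M+1]$, which is absorbed into $R_{M-1}$ (again controlled by part~(2) at a neighbouring level) together with a set whose Lebesgue measure shrinks to zero because $u_p$ is a.e.\ finite. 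Combining these estimates yields the identity, and the dominated-convergence step from the first paragraph finishes the proof. The delicate part is exactly this residual bookkeeping: condition~(2) only directly controls mixed pairs, so the "both large'' piece must be handled by the explicit shape of the cutoff $\eta_M$ rather than by an a priori $W^{s,p}$-bound on $u_p$ itself.
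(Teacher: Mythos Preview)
The paper does not give its own proof of this lemma (it only cites \cite{ZZ20}), so there is no argument to compare against; the question is simply whether your sketch works.

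Your overall plan is correct: once the identity
\[
\int_{\mathcal{C}_\Omega}\bigl[S_{\delta,\gamma,h}(u_p)(x)-S_{\delta,\gamma,h}(u_p)(y)\bigr]\frac{|u_p(x)-u_p(y)|^{p-2}(u_p(x)-u_p(y))}{|x-y|^{N+sp}}\,dxdy=\int_\Omega f\,S_{\delta,\gamma,h}(u_p)\,dx
\]
is in hand, dominated convergence on the right finishes the job. Testing Definition~\ref{def2.1}~(3) with $\varphi\equiv 1$ and $S^M=\eta_M S_{\delta,\gamma,h}$ is legitimate by Remark~\ref{rem2.2}, and monotone convergence handles the main piece of your symmetric decomposition.

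The gap is in your treatment of the cross term. Your claim that the ``narrow transition slice'' is absorbed into $R_{M-1}$ together with a set of small Lebesgue measure does not suffice. First, the leftover set that is \emph{not} in $R_{M-1}$ contains pairs such as $u_p(x)\in(M,M+1]$ with $u_p(y)$ arbitrarily large (indeed, any pair with $\min>M-1$ escapes $R_{M-1}$), so the ``other coordinate'' is in no way confined. Second, even if the leftover set had small measure (note $\mathcal{C}_\Omega$ itself has infinite measure, so this already needs care), smallness of the measure cannot control the integral: the integrand $|u_p(x)-u_p(y)|^{p-1}/|x-y|^{N+sp}$ is singular and is not known a~priori to be integrable over such sets. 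In fact, since $\eta_M=1-S_{1,1,M}$ on $[0,\infty)$, your cross term is, up to a bounded factor, exactly the quantity $\int_{\mathcal{C}_\Omega}[S_{1,1,M}(u_p)(x)-S_{1,1,M}(u_p)(y)]\,|u_p(x)-u_p(y)|^{p-2}(u_p(x)-u_p(y))/|x-y|^{N+sp}\,dxdy$, which is the very object the lemma asserts tends to zero; so without further input the argument is circular.

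To close the gap one typically needs more than parts (2)--(3) of Definition~\ref{def2.1} alone: for instance, going back to the approximating weak solutions $u_{n,p}$ (as in \cite{ZZ20} and in the proof of Lemma~\ref{lem2.4}) and testing with $T_{M+c}(u_{n,p})-T_{M-c}(u_{n,p})$ yields $[\,T_{M+c}(u_p)-T_{M-c}(u_p)\,]_{W^{s,p}}^p\le C\int_{\{u_p>M-c\}}f\,dx\to 0$, which is exactly the missing energy decay needed to kill the residual window where both values lie near $M$. You should either invoke this construction explicitly or rework the residual estimate so that condition~(2) genuinely covers every piece.
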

	\begin{lemma}
		\label{lem2.4}
		The renormalized solution $u_p$ to problem \eqref{2.1} for fixed $p>1$ and $s\in(0,1)$ satisfies
		\begin{align*}
			\left[ T_k(u_p)\right]_{W^{s,p}(\mathbb{R}^N)}^p\le k\|f\|_{L^1(\Omega)}
		\end{align*}
		for fixed $k>0$.
	\end{lemma}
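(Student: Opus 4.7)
The plan is to test the renormalized formulation Definition~\ref{def2.1}(3) against a compactly supported approximation of $T_k(u_p)$, apply the standard coercivity inequality for the fractional $p$-Laplacian, and use Lemma~\ref{lem2.3} to discard the truncation error. Since $f\ge 0$, the renormalized solution satisfies $u_p\ge 0$ almost everywhere. Fix $k>0$, and for each integer $n\ge k$ set
\begin{equation*}
S_n(t):=\begin{cases} T_k(t), & 0\le t\le n,\\ k(n+1-t), & n<t\le n+1,\\ 0, & t\ge n+1,\end{cases}
\end{equation*}
extended by antisymmetry to $\mathbb R$, so that $S_n\in W^{1,\infty}(\mathbb R)$ has compact support, $|S_n|\le k$, and $S_n\to T_k$ pointwise. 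Choose also $\{\varphi_m\}\subset C^\infty_0(\Omega)$ with $0\le\varphi_m\le 1$ and $\varphi_m\to 1$ in $\Omega$.

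Inserting $S=S_n$ and $\varphi=\varphi_m$ into Definition~\ref{def2.1}(3) and passing $m\to\infty$ (by dominated convergence, since $|S_n(u_p)\varphi_m|\le k$ and the relevant Dirichlet energy of $S_n(u_p)$ is controlled via $T_{n+1}(u_p)\in W^{s,p}_0(\Omega)$) produces
\begin{equation*}
\int_{\mathbb R^N\times\mathbb R^N}\frac{|u_p(x)-u_p(y)|^{p-2}(u_p(x)-u_p(y))\bigl(S_n(u_p(x))-S_n(u_p(y))\bigr)}{|x-y|^{N+sp}}\,dxdy=\int_\Omega f\,S_n(u_p)\,dx,
\end{equation*}
where the integration has been extended from $\mathcal{C}_\Omega$ to $\mathbb R^N\times\mathbb R^N$ using $u_p\equiv 0$ outside $\Omega$ together with $S_n(0)=0$.

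The key algebraic observation is that $T_k(t)-S_n(t)=T_k(kG_n(t))=S_{k,k,n}(t)$ for every $t\in\mathbb R$, which is verified case by case on the intervals $[0,n]$, $[n,n+1]$, $[n+1,\infty)$ and extended by antisymmetry. Writing $S_n=T_k-S_{k,k,n}$ splits the identity above as $I_1-J_n=\int_\Omega f\,S_n(u_p)\,dx$, where
\begin{equation*}
I_1:=\int_{\mathbb R^N\times\mathbb R^N}\frac{|u_p(x)-u_p(y)|^{p-2}(u_p(x)-u_p(y))\bigl(T_k(u_p(x))-T_k(u_p(y))\bigr)}{|x-y|^{N+sp}}\,dxdy,
\end{equation*}
and $J_n$ is the analogous integral with $S_{k,k,n}$ in place of $T_k$. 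The pointwise inequality $|a-b|^{p-2}(a-b)(T_k(a)-T_k(b))\ge|T_k(a)-T_k(b)|^p$ then yields $I_1\ge[T_k(u_p)]^p_{W^{s,p}(\mathbb R^N)}$.

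Finally, Lemma~\ref{lem2.3} applied with $\delta=\gamma=k$ and $h=n$ forces $J_n\to 0$ as $n\to\infty$, while $|S_n(u_p)|\le k$ and $S_n(u_p)\to T_k(u_p)$ pointwise let dominated convergence upgrade $\int_\Omega f\,S_n(u_p)\,dx\to\int_\Omega f\,T_k(u_p)\,dx\le k\|f\|_{L^1(\Omega)}$. Combining,
\begin{equation*}
[T_k(u_p)]^p_{W^{s,p}(\mathbb R^N)}\le I_1=\lim_{n\to\infty}\Bigl(\int_\Omega f\,S_n(u_p)\,dx+J_n\Bigr)\le k\|f\|_{L^1(\Omega)},
\end{equation*}
as asserted. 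The main obstacle is the algebraic identification $T_k-S_n=S_{k,k,n}$: recognizing that the truncation error is exactly a member of the family $S_{\delta,\gamma,h}$ from Lemma~\ref{lem2.3} is precisely what lets us absorb the error cleanly, bypassing the delicate region-by-region analysis that would otherwise be required to control the contribution on the set where both $u_p(x)$ and $u_p(y)$ exceed $n$.
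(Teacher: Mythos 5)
Your argument is correct in substance but takes a genuinely different route from the paper. The paper proves Lemma \ref{lem2.4} extrinsically: it truncates the datum, $f_n=T_n(f)$, takes the nonnegative weak solutions $u_{n,p}\in W^{s,p}_0(\Omega)$ of \eqref{2.1} with datum $f_n$, tests with $T_k(u_{n,p})$ to get the estimate at the approximate level, and then transfers it to $u_p$ through the strong convergence $T_k(u_{n,p})\to T_k(u_p)$ in $W^{s,p}_0(\Omega)$ borrowed from \cite{ZZ20}; this ties the statement to the renormalized solution produced by that approximation scheme (or to its uniqueness). You instead argue intrinsically from Definition \ref{def2.1}: you test (3) with a compactly supported surrogate $S_n$ of $T_k$, identify the truncation error through $T_k-S_n=S_{k,k,n}$ as exactly the quantity killed by Lemma \ref{lem2.3}, and finish with the elementary inequality $|a-b|^{p-2}(a-b)\left( T_k(a)-T_k(b)\right)\ge |T_k(a)-T_k(b)|^p$. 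This is cleaner in that it works for any renormalized solution in the sense of Definition \ref{def2.1} without invoking the construction in \cite{ZZ20}; the only bookkeeping you should make explicit is that the split $I_1-J_n$ is legitimate because the $T_k$-integrand and the $S_{k,k,n}$-integrand are both nonnegative (while $S_n$ itself is not monotone), the $S_n$-integral is absolutely convergent by the renormalized formulation, and $J_n<+\infty$ for large $n$ by Lemma \ref{lem2.3}.

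The one step that needs repair is the passage $\varphi_m\to 1$. Dominated convergence does not apply as stated: writing $\left( S_n(u_p)\varphi_m\right)(x)-\left( S_n(u_p)\varphi_m\right)(y)=\varphi_m(x)\left[ S_n(u_p)(x)-S_n(u_p)(y)\right]+S_n(u_p)(y)\left[ \varphi_m(x)-\varphi_m(y)\right]$, the second term has no $m$-independent integrable majorant near the diagonal, since functions $\varphi_m\in C^\infty_0(\Omega)$ with $\varphi_m\to 1$ cannot be uniformly Lipschitz, and the crude bound $|\varphi_m(x)-\varphi_m(y)|\le 2$ leaves the non-integrable factor $|u_p(x)-u_p(y)|^{p-1}|x-y|^{-N-sp}$. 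The fix is to skip the limit altogether: by Remark \ref{rem2.2} the identity in (3) of Definition \ref{def2.1} holds for $\varphi\in W^{s,p}(\Omega)\cap L^\infty(\Omega)$ with $S(u_p)\varphi\in W^{s,p}_0(\Omega)$, and $\varphi\equiv 1$ is admissible because $S_n$ is Lipschitz, $S_n(0)=0$, and $S_n(u_p)$ depends only on $T_{n+1}(u_p)\in W^{s,p}_0(\Omega)$, so that $S_n(u_p)\in W^{s,p}_0(\Omega)$. With that substitution the rest of your proof goes through; also note that the preliminary claim $u_p\ge 0$ is not needed, since your identity $T_k-S_n=S_{k,k,n}$ holds on all of $\mathbb{R}$ by oddness of both sides.
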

	\begin{proof}
		For the function $0\le f\in L^1(\Omega)$, taking a sequence $f_n=T_n(f)$ such that
		\begin{align*}
			0\le f_n\le f
		\end{align*}
		and
		\begin{align*}
			f_n\rightarrow f\quad \text{in }L^1(\Omega),
		\end{align*}
		then, by calculus of variations, for any $n\in\mathbb{N}\cup\left\lbrace 0\right\rbrace $, there exists a nonnegative weak solution $u_{n,p}$ in $W^{s,p}_0(\Omega)$ to problem \eqref{2.1}, where $f$ is replaced by $f_n$. That is, for any $v\in W^{s,p}_0(\Omega)$, there holds
		\begin{align*}
			\begin{aligned}
				\int_{\mathcal{C}_\Omega}\frac{|u_{n,p}(x)-u_{n,p}(y)|^{p-2}(u_{n,p}(x)-u_{n,p}(y))(v(x)-v(y))}{|x-y|^{N+sp}}dxdy=\int_{\Omega}f_nvdx.
			\end{aligned}
		\end{align*}
		We take $v=T_k(u_{n,p})$ as a test function to have
		\begin{align*}
			\begin{aligned}
				&\quad\int_{\mathcal{C}_\Omega}\frac{|T_k(u_{n,p})(x)-T_k(u_{n,p})(y)|^p}{|x-y|^{N+sp}}dxdy\\
				&\le\int_{\mathcal{C}_\Omega}\frac{|u_{n,p}(x)-u_{n,p}(y)|^{p-2}\left( u_{n,p}(x)-u_{n,p}(y)\right)}{|x-y|^{N+sp}}\left[ T_k(u_{n,p})(x)-T_k(u_{n,p})(y)\right]dxdy\\
				&=\int_{\Omega}f_nT_k(u_{n,p})dx \le k\|f_n\|_{L^1(\Omega)} \le k\|f\|_{L^1(\Omega)}.
			\end{aligned}
		\end{align*}
		In addition, similar to the proof of \cite[Lemmas 3.2 and 3.3]{ZZ20}, we could prove
		\begin{align*}
			T_k(u_{n,p})\rightarrow T_k(u_p)\quad \text{in }W^{s,p}_0(\Omega)\ \text{as }n\rightarrow+\infty.
		\end{align*}
		Therefore,  we arrive at
		\begin{align*}
			\left[ T_k(u_p)\right]_{W^{s,p}(\mathbb{R}^N)}^p\le k\|f\|_{L^1(\Omega)}.
		\end{align*}
	\end{proof}

\section{The asymptotic behaviour}
\label{sec3}
In this section, we study the asymptotic behaviour of the renormalized solutions sequence $\left\lbrace u_p\right\rbrace $ as $p$ goes to $1^+$. We not only find a measurable function $u$ satisfying $u_p\rightarrow u$ by applying property of Lorentz space, but also find a function $Z$, which will play the role of $\frac{u(x)-u(y)}{|u(x)-u(y)|}$ in a imprecise way.
 
We first show that the functions in fractional Sobolev space  $W^{s,p}_0(\Omega)$ belong to the ones in $W^{s',1}_0(\Omega)$ for all $1<p<2$ and $0<s'<s<1$.

\begin{lemma}
		\label{lem3.1}
		For a function $u\in W^{s,p}_0(\Omega)$ with $p\in (1,2)$, there holds
		\begin{align*}
			u\in W^{s',1}_0(\Omega)
		\end{align*}
		for all $0<s'<s<1$, moreover,
		\begin{align*}
			[u]_{W^{s',1}(\mathbb{R}^N)}\le C[u]_{W^{s,p}(\mathbb{R}^N)}^p+C,
		\end{align*}
		where the constant $C$ depends on $s,s',\Omega,N$.
\end{lemma}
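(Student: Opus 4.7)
The plan is to establish the estimate via Young's inequality applied pointwise to the integrand, combined with a splitting of the integration domain into near and far regions. I would write
\begin{align*}
\frac{|u(x)-u(y)|}{|x-y|^{N+s'}} = \frac{|u(x)-u(y)|}{|x-y|^{(N+sp)/p}} \cdot |x-y|^{(N+sp)/p-(N+s')}
\end{align*}
and apply Young's inequality with exponents $p$ and $p'=p/(p-1)$ to obtain
\begin{align*}
\frac{|u(x)-u(y)|}{|x-y|^{N+s'}} \le \frac{1}{p}\frac{|u(x)-u(y)|^p}{|x-y|^{N+sp}} + \frac{1}{p'}|x-y|^{-N+\gamma},
\end{align*}
where $\gamma := p(s-s')/(p-1) > 0$. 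The first term integrates to $\frac{1}{p}[u]_{W^{s,p}(\mathbb{R}^N)}^p$; the second is a purely geometric kernel that is integrable near the diagonal but not at infinity, which forces the subsequent splitting.

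Since $u\equiv 0$ outside $\Omega$, the $W^{s',1}$-double integral reduces to one over $\mathcal{C}_\Omega$, and I would split this into the near region $\mathcal{C}_\Omega\cap\{|x-y|<1\}$ and the far region $\mathcal{C}_\Omega\cap\{|x-y|\ge 1\}$. On the near region, the Young estimate above yields the $W^{s,p}$ seminorm term plus
\begin{align*}
\frac{1}{p'}\iint_{\mathcal{C}_\Omega\cap\{|x-y|<1\}} |x-y|^{-N+\gamma}\,dxdy \le \frac{2|\Omega|\omega_{N-1}}{p'\gamma} = \frac{2|\Omega|\omega_{N-1}(p-1)^2}{p^2(s-s')},
\end{align*}
which is uniformly bounded for $p\in (1,2)$. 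On the far region, the bound $|x-y|\ge 1$ gives directly
\begin{align*}
\iint_{\mathcal{C}_\Omega\cap\{|x-y|\ge 1\}} \frac{|u(x)-u(y)|}{|x-y|^{N+s'}}\,dxdy \le \frac{2\omega_{N-1}}{s'}\|u\|_{L^1(\Omega)} \le C_{\Omega,N,s,s'}[u]_{W^{s,p}(\mathbb{R}^N)},
\end{align*}
after using Hölder's inequality on the bounded domain $\Omega$ together with the fractional Poincar\'e inequality for $W^{s,p}_0(\Omega)$, whose constant is uniform for $p\in (1,2)$. The elementary inequality $a\le a^p+1$ (valid for $a\ge 0$ and $p\ge 1$) then absorbs the remaining linear term into the desired form $C[u]_{W^{s,p}(\mathbb{R}^N)}^p+C$.

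The main obstacle is ensuring the constant $C$ is independent of $p\in (1,2)$. In Young's inequality the factor $1/p'$ tends to $0$ as $p\to 1^+$, but the integral $\iint |x-y|^{-N+\gamma}\,dxdy$ over a set of diameter $D>1$ grows like $D^\gamma/\gamma$ with $\gamma\to\infty$, so the product $(D^\gamma/\gamma)/p'$ would blow up exponentially in $1/(p-1)$ if one did not cut off the large $|x-y|$ regime. Restricting to $|x-y|<1$ before applying Young replaces $D^\gamma$ by $1$, so that the surviving $(p-1)^2$ factor dominates; this splitting at distance one is the crucial technical move. Beyond this, the proof is routine bookkeeping plus the standard $p$-uniform fractional Poincar\'e estimate in $W^{s,p}_0(\Omega)$.
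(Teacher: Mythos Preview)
Your argument is correct and follows essentially the same route as the paper: both proofs apply Young's inequality with exponents $p,p'$ to split the integrand into the $W^{s,p}$-seminorm term and the geometric kernel $|x-y|^{-N+(s-s')p'}$, cut at distance $1$ to keep the latter integrable with a $p$-independent bound, and control the far region by $\|u\|_{L^1(\Omega)}$ via the fractional Poincar\'e inequality. The only cosmetic difference is that the paper organizes the splitting by the spatial decomposition $\Omega\times\Omega_0$, $\Omega\times(\mathbb{R}^N\setminus\Omega_0)$, etc.\ (with $\Omega_0=\Omega+B_1(0)$), whereas you split $\mathcal{C}_\Omega$ directly by $\{|x-y|\lessgtr 1\}$; your explicit discussion of why the cut at $1$ is needed for $p$-uniformity is in fact more transparent than the paper's.
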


\begin{proof}
		Observe that  $\mathbb{R}^N\times\mathbb{R}^N$  can be divided into $\Omega\times\Omega_0$, $\Omega_0\times\Omega$, $\Omega\times(\mathbb{R}^N\backslash\Omega_0)$, $(\mathbb{R}^N\backslash\Omega_0)\times\Omega$ and $(\mathbb{R}^N\backslash\Omega)\times(\mathbb{R}^N\backslash\Omega)$, i.e., 
		\begin{align*}
			\mathbb{R}^N\times\mathbb{R}^N=(\Omega\times\Omega_0)\cup(\Omega_0\times\Omega)\cup(\Omega\times(\mathbb{R}^N\backslash\Omega_0))\cup((\mathbb{R}^N\backslash\Omega_0)\times\Omega)\cup((\mathbb{R}^N\backslash\Omega)\times(\mathbb{R}^N\backslash\Omega)),
		\end{align*}
		where $\Omega_0:=\Omega+B_1(0)$. We prove the lemma in different subdomains.
		
		\item[(1)] In $\Omega\times\Omega_0$, from Young's inequality and the similar arguments in \cite[Section 6]{DPV12},  we have
		\begin{align}
			\label{3.1}
			&\quad\int_{\Omega\times\Omega_0}\frac{|u(x)-u(y)|}{|x-y|^{N+s'}}dxdy\notag\\
			&=\int_{(\Omega\times\Omega_0)\cap\left\lbrace |x-y|\le 1\right\rbrace }\frac{|u(x)-u(y)|}{|x-y|^{N+s'}}dxdy
			+\int_{(\Omega\times\Omega_0)\cap\left\lbrace |x-y|>1\right\rbrace }\frac{|u(x)-u(y)|}{|x-y|^{N+s'}}dxdy\notag\\
			&\le \int_{(\Omega\times\Omega_0)\cap\left\lbrace |x-y|\le 1\right\rbrace }\frac{|u(x)-u(y)|}{|x-y|^{N+s'}}dxdy
			+\int_{(\Omega\times\Omega_0)\cap\left\lbrace |x-y|>1\right\rbrace }(|u(x)|+|u(y)|)dxdy\notag\\
			&\le \int_{(\Omega\times\Omega_0)\cap\left\lbrace |x-y|\le 1\right\rbrace }\frac{1}{|x-y|^{N}}\left( \frac{1}{p}\frac{|u(x)-u(y)|^p}{|x-y|^{sp}}+\frac{1}{p'}\frac{1}{|x-y|^{(s'-s)p'}}\right)dxdy\notag\\ &\quad+C(\Omega)\|u\|_{L^1(\Omega)}\notag\\
			&\le \int_{(\Omega\times\Omega_0)\cap\left\lbrace |x-y|\le 1\right\rbrace }\frac{|u(x)-u(y)|^p}{|x-y|^{N+sp}}dxdy+C(\Omega)\|u\|_{L^1(\Omega)}\notag\\
			&\quad+\int_{(\Omega\times\Omega_0)\cap\left\lbrace |x-y|\le 1\right\rbrace }|x-y|^{(s-s')p'-N}dxdy\notag\\
			&\le C(\Omega,N,s)\|u\|_{L^p(\Omega)}+[u]_{W^{s,p}(\mathbb{R}^N)}^p+\int_{\Omega}dx\int_{0}^{1}r^{(s-s')p'-1}dr\notag\\
			&\le C(\Omega,N,s)[u]^p_{W^{s,p}(\mathbb{R}^N)}+C(\Omega,N)\frac {r^{(s-s')p'}}{(s-s')p'}\Bigg|_{0}^{1}\notag\\
			&\le C(\Omega,N,s)[u]^p_{W^{s,p}(\mathbb{R}^N)}+C(\Omega,N,s,s'),
		\end{align}
		where $p'$ is the H\"{o}lder conjugate exponent of $p$, that is $\frac 1p+\frac{1}{p'}=1$.
		
		\item[(2)] In $\Omega\times(\mathbb{R}^N\backslash\Omega_0)$, we have
		\begin{align}
			\label{3.2}
			\int_{\Omega\times(\mathbb{R}^N\backslash\Omega_0)}\frac{|u(x)-u(y)|}{|x-y|^{N+s'}}dxdy&=\int_{\Omega\times(\mathbb{R}^N\backslash\Omega_0)}\frac{|u(x)|}{|x-y|^{N+s'}}dxdy\notag\\
			&\le \int_{\Omega}|u(x)|dx\int_{1}^{+\infty}\frac{1}{r^{s'+1}}dr\notag\\
			&\le C(N,s',\Omega)\|u\|_{L^p(\Omega)}.
		\end{align}
		
		\item[(3)] Similar to (1) and (2) there holds
		\begin{align}
			\label{3.3}
			\int_{(\Omega_0\times\Omega)}\frac{|u(x)-u(y)|}{|x-y|^{N+s'}}dxdy\le C(\Omega)[u]^p_{W^{s,p}(\mathbb{R}^N)}+C(\Omega,N,s,s')
		\end{align}
		and
		\begin{align}
			\label{3.4}
			\int_{(\mathbb{R}^N\backslash\Omega_0)\times\Omega}\frac{|u(x)-u(y)|}{|x-y|^{N+s'}}dxdy\le C(N,s',\Omega)\|u\|_{L^p(\Omega)}.
		\end{align}
		
		\item[(4)] In $(\mathbb{R}^N\backslash\Omega)\times(\mathbb{R}^N\backslash\Omega)$,  from the definition of $W^{s,p}_0(\Omega)$, we get
		\begin{align}
			\label{3.5}
			\int_{(\mathbb{R}^N\backslash\Omega)\times(\mathbb{R}^N\backslash\Omega)}\frac{|u(x)-u(y)|}{|x-y|^{N+s'}}dxdy=0.
		\end{align}\par 
		Therefore, combining \eqref{3.1}--\eqref{3.5}, we obtain
		\begin{align}
			\label{3.6}
			[u]_{W^{s',1}(\mathbb{R}^N)}=\int_{\mathbb{R}^N\times\mathbb{R}^N}\frac{|u(x)-u(y)|}{|x-y|^{N+s'}}dxdy\le C(\Omega,N,s')[u]^p_{W^{s,p}(\mathbb{R}^N)}+C(\Omega,s,s',N).
		\end{align}
		This finishes the proof.
\end{proof}
	
	\begin{remark}
		By using H$\ddot{o}$lder's inequality in \eqref{3.1} rather than Young's inequality, we could show that
		\begin{align*}
			W^{s',1}_0(\Omega)\subset W^{s,p}_0(\Omega)
		\end{align*}
		is continuous. We refer to \cite{C17} for more details about this proof.
	\end{remark}

In the following part of this section, we study the renormalized solutions $\left\lbrace u_p\right\rbrace$ and show the behaviour of the sequence
	\begin{align*}
		\left\lbrace \frac{|u_p(x)-u_p(y)|^{p-2}(u_p(x)-u_p(y))}{|x-y|^{\left( \frac{N}{p}+s\right)(p-1) }}\right\rbrace,
	\end{align*}
and  $\left\lbrace u_p\right\rbrace\subset W^{s',1}_0(\Omega)$ as $p$ goes to $1^+$.

	\begin{proposition}
		\label{pro3.2}
		Let $u_p$ be the renormalized solution to problem \eqref{2.1}, then
		\begin{align*}
			u_p\in L^{\frac{N(p-1)}{N-sp},\infty}(\Omega)
		\end{align*}
		and
		\begin{align*}
			{\rm meas}\left( \left\lbrace 
			(x,y)\in \mathbb{R}^N\times\mathbb{R}^N:\frac{|u_p(x)-u_p(y)|}{|x-y|^{\frac{N}{p}+s}}>h
			\right\rbrace\right)  \le Ch^{\frac{N(1-p)}{N-s}}
		\end{align*}
		for all $h>0$, where $C$ depends only on $f, N, s$.
	\end{proposition}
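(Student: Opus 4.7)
The plan is to establish the two claims in sequence, both starting from the energy bound in Lemma \ref{lem2.4}.

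For the first claim $u_p\in L^{N(p-1)/(N-sp),\infty}(\Omega)$, I would combine Lemma \ref{lem2.4} with the fractional Sobolev embedding $W^{s,p}_0(\Omega)\hookrightarrow L^{p^*_s}(\Omega)$, where $p^*_s=Np/(N-sp)$ (valid since $sp<N$ for $p$ close to $1$). Lemma \ref{lem2.4} supplies $[T_k(u_p)]^p_{W^{s,p}(\mathbb R^N)}\le k\|f\|_{L^1}$, and combined with the embedding this gives $\|T_k(u_p)\|_{L^{p^*_s}}^{p^*_s}\le C(k\|f\|_{L^1})^{p^*_s/p}$. Since $\{u_p>k\}\subseteq\{T_k(u_p)\ge k\}$, Chebyshev's inequality then yields ${\rm meas}(\{u_p>k\})\le Ck^{-p^*_s(p-1)/p}$, and the elementary identity $p^*_s(p-1)/p=N(p-1)/(N-sp)$ produces the claimed Marcinkiewicz bound.

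For the second claim I would perform a Boccardo--Gallou\"{e}t type truncation argument adapted to the nonlocal setting. Fix $h>0$ and an auxiliary $k>0$ to be chosen below, and decompose
\begin{align*}
E_h=(E_h\cap B_k)\cup(E_h\cap B_k^c),\qquad B_k:=\{(x,y):u_p(x)\le k,\ u_p(y)\le k\}.
\end{align*}
On $B_k$, $T_k u_p$ agrees with $u_p$ at both arguments, so Chebyshev on the Gagliardo integrand yields ${\rm meas}(E_h\cap B_k)\le h^{-p}[T_k(u_p)]^p_{W^{s,p}}\le Ckh^{-p}\|f\|_{L^1}$. For $E_h\cap B_k^c$ I would exploit $u_p\ge 0$ (from $f\ge 0$ by the comparison principle) together with the monotonicity of $T_k$ and $G_k:={\rm Id}-T_k$, giving the pointwise decomposition $|u_p(x)-u_p(y)|=|T_k u_p(x)-T_k u_p(y)|+|G_k u_p(x)-G_k u_p(y)|$. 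Using symmetry to reduce to $x\in A_k:=\{u_p>k\}$, the constraint $|x-y|^{N/p+s}<|u_p(x)-u_p(y)|/h$ imposed by $E_h$ confines $y$ to a ball of radius at most $(u_p(x)/h)^{p/(N+sp)}$; combining this slice volume with the weak Lorentz bound ${\rm meas}(A_k)\le Ck^{-N(p-1)/(N-sp)}$ from the first part should produce ${\rm meas}(E_h\cap B_k^c)\le Ck^{-N(p-1)/(N-sp)}$. Choosing $k=h^{(N-sp)/(N-s)}$ then balances the two contributions and delivers the claimed bound $Ch^{-N(p-1)/(N-s)}$, as can be verified from the identity $1+N(p-1)/(N-sp)=p(N-s)/(N-sp)$.

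The main obstacle lies in the estimate on ${\rm meas}(E_h\cap B_k^c)$. Since $B_k^c$ has infinite Lebesgue measure in $\mathbb R^{2N}$, the bound cannot proceed via a product of level-set measures, and the ball constraint from membership in $E_h$ is essential. Moreover, a naive moment bound $\int_{A_k}u_p^{Np/(N+sp)}\,dx$ based solely on the $L^{q,\infty}$ norm of $u_p$ is borderline or divergent as $p\to 1^+$, so the slice-wise bound should instead use a dyadic level-set decomposition of $A_k$ together with the sharper one-sided bound $|u_p(x)-u_p(y)|\le u_p(x)$ on $\{u_p(y)\le u_p(x)\}$ and its symmetric counterpart. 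A further subtlety is that the constant $C$ must remain uniform in $p$ as $p\to 1^+$ in order for this estimate to feed into the limiting argument of Section \ref{sec4}.
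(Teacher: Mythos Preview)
Your treatment of the first claim and of the piece $E_h\cap B_k$ in the second claim is correct and coincides with the paper's argument. The divergence is in how you propose to handle $E_h\cap B_k^c$. The paper, following the classical Boccardo--Giachetti--Diaz--Murat scheme, simply \emph{drops the constraint} $E_h$ and bounds
\[
\text{meas}(E_h\cap B_k^c)\ \le\ \text{meas}(B_k^c)\ \le\ C\,\text{meas}\{u_p>k\}\ \le\ Ck^{\frac{N(1-p)}{N-sp}},
\]
using nothing but the Lorentz estimate already established in the first part. No slice-in-$y$ argument, no dyadic decomposition, no moment bound on $u_p$ is needed. Your proposed route through the ball constraint $|x-y|\lesssim (u_p(x)/h)^{p/(N+sp)}$ and a level-set decomposition of $\{u_p>k\}$ is substantially harder and, as you yourself observe, borderline; it is simply unnecessary once one sees that the crude inclusion $E_h\cap B_k^c\subset B_k^c$ already gives the right power of $k$. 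Your optimal choice $k=h^{(N-sp)/(N-s)}$ then agrees with the paper's minimization and closes the argument.

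Your worry that $B_k^c$ has infinite measure in $\mathbb R^N\times\mathbb R^N$ is well founded, and in fact the paper's inequality $\Phi(k,0)\le Ck^{N(1-p)/(N-sp)}$ is not literally correct on all of $\mathbb R^N\times\mathbb R^N$. But the remedy is not a slice argument: since the estimate is only ever applied on the bounded set $A_\Omega=(\Omega\times\Omega_0)\cup(\Omega_0\times\Omega)$ in Theorem~\ref{the3.4}, one may carry out the whole argument on $A_\Omega$, where $\text{meas}(B_k^c\cap A_\Omega)\le 2|\Omega_0|\,\text{meas}\{u_p>k\}$ follows trivially from the first part.
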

	\begin{proof}
		From Lemma \ref{lem2.4} and fractional Sobolev's inequality, we obtain
		\begin{align*}
			\|T_k(u_p)\|_{L^{p*}(\Omega)}\le C\left[ T_k(u_p)\right]_{W^{s,p}(\mathbb{R}^N)}\le C\left( Mk\right)^{\frac{1}{p}},  
		\end{align*}
		where $M:=\|f\|_{L^1(\Omega)}$ and $C$ depends only on $N,s,p$ and $p*=\frac{Np}{N-sp}$. Therefore, for any $\varepsilon>0$,
		\begin{align*}
			\text{meas}\left( \left\lbrace x\in\Omega:u_p>\varepsilon\right\rbrace \right) \le \left( \frac{\|T_k(u_p)\|_{L^{p*}(\Omega)}}{\varepsilon}\right)^{p*} \le \left( C(Mk)^\frac{1}{p}\right)^\frac{Np}{N-sp}\varepsilon^{-p*} ,
		\end{align*}
		letting $\varepsilon=k$, we have
		\begin{align*}
			\text{meas}\left( \left\lbrace x\in\Omega:u_p>k\right\rbrace\right)  \le CM^{\frac{N}{N-sp}}k^{\frac{N(1-p)}{N-sp}},
		\end{align*}
		which means, by definition of Lorentz space,
		\begin{align*}
			u_p\in L^{\frac{N(p-1)}{N-sp},\infty}(\Omega).
		\end{align*}\par 
		Subsequently, we show
		\begin{align*}
			\text{meas}\left( \left\lbrace 
			(x,y)\in \mathbb{R}^N\times\mathbb{R}^N:\frac{|u_p(x)-u_p(y)|}{|x-y|^{\frac{N}{p}+s}}>h
			\right\rbrace\right)  \le Ch^{\frac{N(1-p)}{N-s}}.
		\end{align*}
		Similar to the arguments used in \cite{BGD93}, set
		\begin{align*}
			\varPhi(k,\lambda)=\text{meas}\left( \left\lbrace (x,y)\in \mathbb{R}^N\times\mathbb{R}^N:\frac{|u_p(x)-u_p(y)|^p}{|x-y|^{N+sp}}>\lambda,\  \max\left\lbrace u_p(x),u_p(y)\right\rbrace>k \right\rbrace\right) .
		\end{align*}
		Obviously, we have
		\begin{align}
			\label{3.8}
			\varPhi(k,0)\le CM^{\frac{N}{N-sp}}k^{\frac{N(1-p)}{N-sp}}
		\end{align}
		and $\lambda\mapsto\varPhi(k,\lambda)$ decreasing in $(0,+\infty)$. In addition, note that
		\begin{align*}
			\frac{1}{\lambda}\int_{0}^{\lambda}\left( \varPhi(0,\mu)-\varPhi(k,0)\right)d\mu\le \frac{1}{\lambda}\int_{0}^{\lambda} \left( \varPhi(0,\mu)-\varPhi(k,\mu)\right)d\mu,
		\end{align*}
		there holds
		\begin{align}
			\label{3.9}
				\varPhi(0,\lambda)&\le \frac{1}{\lambda}\int_{0}^{\lambda}\varPhi(0,\mu)d\mu\notag\\
				&\le \varPhi(k,0)+\frac{1}{\lambda}\int_{0}^{\lambda}\left( \varPhi(0,\mu)-\varPhi(k,\mu)\right)d\mu\notag\\
				&=:\varPhi(k,0)+I. 
		\end{align}
		For $I$, we have
		\begin{align}
			\label{3.10}
				I&=\int_{0}^{\lambda}\text{meas}\left( \left\lbrace (x,y)\in\mathbb{R}^N\times\mathbb{R}^N:\frac{|u_p(x)-u_p(y)|^p}{|x-y|^{N+sp}}>\mu,\ \max\left\lbrace u_p(x),u_p(y)\right\rbrace\le k \right\rbrace\right)  d\mu\notag\\
				&\le \int_{\left\lbrace |u_p|\le k\right\rbrace\times \left\lbrace |u_p|\le k\right\rbrace}\frac{|u_p(x)-u_p(y)|^p}{|x-y|^{N+sp}}dxdy\notag\\
				&\le \int_{\mathbb{R}^N\times\mathbb{R}^N}\frac{|T_k(u_p)(x)-T_k(u_p)(y)|^p}{|x-y|^{N+sp}}dxdy\notag\\
				&\le kM.
		\end{align}
		Combining \eqref{3.8}, \eqref{3.9} and \eqref{3.10}, we get
		\begin{align*}
			\varPhi(0,\lambda)\le \frac{kM}{\lambda}+CM^{\frac{N}{N-sp}}k^{\frac{N(1-p)}{N-sp}}
		\end{align*}
		for all $k>0$. Therefore,
		\begin{align*}
			\varPhi(0,\lambda)&\le \min\limits_{k>0}\left\lbrace \frac{kM}{\lambda}+CM^{\frac{N}{N-sp}}k^{\frac{N(1-p)}{N-sp}}\right\rbrace\\
			&=C\lambda^{\frac{N(1-p)}{p(N-s)}},
		\end{align*}
		where $C$ is a constant depending only on $N,s,M,\Omega$. Letting $\lambda=h^p$, we obtain
		\begin{align*}
			\text{meas}\left( \left\lbrace (x,y)\in \mathbb{R}^N\times\mathbb{R}^N:\frac{|u_p(x)-u_p(y)|}{|x-y|^{\frac{N}{p}+s}}>h\right\rbrace\right) \le Ch^{\frac{N(1-p)}{N-s}}. 
		\end{align*}
	\end{proof}
	\begin{remark}
		\label{rem3.3}
		We point out that, in Proposition \ref{pro3.2}, although the constant $C$ depends on $p$, we could limit $p\in (1,2)$, and we may choose a constant $C_0$ big enough, depending only on $N,s,f,\Omega$, such that for all $C=C(N,p,s,\Omega,f)$, $C<C_0$.
	\end{remark}

	\begin{theorem}
		\label{the3.4}
		Let $u_p$ be the renormalized solution to problem \eqref{2.1}, there exists a measurable function $u$ and a function $Z$, such that
		\begin{align*}
			u_p\rightarrow u\quad \text{a.e. in }\Omega,
		\end{align*}
		\begin{align*}
			\frac{|u_p(x)-u_p(y)|^{p-2}\left( u_p(x)-u_p(y)\right) }{|x-y|^{\left( \frac{N}{p}+s\right)(p-1) }}\rightharpoonup Z(x,y) \quad \text{in } L^q(A_\Omega)
		\end{align*}
		and
		\begin{align*}
			|u_p(\cdot)|^{p-2}u_p(\cdot)\rightharpoonup Z(\cdot, y) \quad \text{in }L^q(\Omega)
		\end{align*}
		for any fixed $y\in \left( \mathbb{R}^N\backslash\Omega_0\right)$ and $q\in \left[ 1,\frac{N}{N-s}\right) $, $A_\Omega=(\Omega\times\Omega_0)\cup(\Omega_0\times\Omega)$, $\Omega_0=\Omega+B_1(0).$
	\end{theorem}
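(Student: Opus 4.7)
The plan is to establish the three convergences in sequence, starting with the pointwise a.e.\ limit $u$ and then the two weak-$L^q$ limits via the measure estimates from Proposition~\ref{pro3.2}. For the a.e.~convergence, I would apply Lemma~\ref{lem3.1} to the truncation $T_k(u_p)\in W^{s,p}_0(\Omega)$ (valid for $p\in(1,2)$), which gives $[T_k(u_p)]_{W^{s',1}(\mathbb{R}^N)}\le C[T_k(u_p)]_{W^{s,p}(\mathbb{R}^N)}^p+C$; Lemma~\ref{lem2.4} then bounds the right-hand side by $Ck\|f\|_{L^1(\Omega)}+C$, uniformly in $p\in(1,2)$. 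Combined with the trivial $L^1$-bound $\|T_k(u_p)\|_{L^1(\Omega)}\le k|\Omega|$, the family $\{T_k(u_p)\}_p$ is uniformly bounded in $W^{s',1}_0(\Omega)$ for each fixed $k$. The compact embedding $W^{s',1}_0(\Omega)\hookrightarrow L^1(\Omega)$ combined with a diagonal argument over $k\in\mathbb{N}$ extracts a subsequence along which $T_k(u_p)\to v_k$ a.e.\ in $\Omega$ for every $k$. Passing the identity $T_{k_1}(T_{k_2}(\cdot))=T_{k_1}(\cdot)$ (for $k_1<k_2$) to the limit gives $T_{k_1}(v_{k_2})=v_{k_1}$, so $u(x):=\lim_{k\to\infty}v_k(x)$ is well-defined in $[0,+\infty]$; a short dichotomy on whether $u(x)$ is finite then shows $u_p\to u$ a.e.

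For the weak convergence in $L^q(A_\Omega)$, the key input is Proposition~\ref{pro3.2}, whose constant is uniform in $p\in(1,2)$ by Remark~\ref{rem3.3}. Writing $\alpha:=q(p-1)$ and using the layer-cake formula yields
\begin{align*}
\int_{A_\Omega}\!\left(\frac{|u_p(x)-u_p(y)|}{|x-y|^{N/p+s}}\right)^{\alpha}\,dxdy\le|A_\Omega|+C\alpha\int_1^\infty h^{\alpha-1+N(1-p)/(N-s)}\,dh,
\end{align*}
and the hypothesis $q<N/(N-s)$ makes the exponent of $h$ strictly less than $-1$, producing a bound of order $|A_\Omega|+Cq/(N/(N-s)-q)$, uniform as $p\to 1^+$. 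This is precisely the $q$-th power of the $L^q(A_\Omega)$-norm of the quantity appearing in the second convergence. Fixing $q_0\in(1,N/(N-s))$, weak sequential compactness of $L^{q_0}(A_\Omega)$ furnishes a further subsequence converging weakly to some $Z\in L^{q_0}(A_\Omega)$; since $A_\Omega$ has finite measure, weak convergence in every $L^q$ with $q\in[1,q_0]$ follows immediately, and letting $q_0\uparrow N/(N-s)$ covers the full range.

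For the third statement the crucial observation is that $|u_p(\cdot)|^{p-2}u_p(\cdot)$ does not depend on $y$. A layer-cake computation using the one-variable estimate $\mathrm{meas}\{u_p>t\}\le Ct^{N(1-p)/(N-sp)}$ from Proposition~\ref{pro3.2} shows $\||u_p|^{p-1}\|_{L^q(\Omega)}$ is uniformly bounded in $p$ for $q<N/(N-s)$, and a further subsequence extraction yields $|u_p|^{p-2}u_p\rightharpoonup\tilde Z$ in $L^q(\Omega)$. I then set $Z(x,y):=\tilde Z(x)$ on $\Omega\times(\mathbb{R}^N\setminus\Omega_0)$, a set disjoint from $A_\Omega$, so no conflict with the previous definition of $Z$ arises; this identification is further justified by noting that for $y\in\mathbb{R}^N\setminus\Omega_0$ one has $u_p(y)=0$, $|x-y|\ge 1$, and $|x-y|^{(N/p+s)(p-1)}\to 1$ uniformly in $x\in\Omega$ as $p\to 1^+$.

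The step I expect to be the main obstacle is ensuring that \emph{one} subsequence simultaneously realizes all the claimed convergences: this demands an additional diagonal extraction over a countable dense family of test functions and over an increasing sequence of exponents $q_0\uparrow N/(N-s)$, together with a careful uniform-integrability argument via the Dunford--Pettis theorem to deduce weak $L^1$-convergence from the $L^{q_0}$-bounds (the case $q=1$ is not covered directly by reflexivity).
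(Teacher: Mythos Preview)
Your proposal is correct and, for the weak $L^q$ limits, proceeds exactly as the paper does (layer-cake from Proposition~\ref{pro3.2} and Remark~\ref{rem3.3}, followed by weak compactness and a diagonal in $q$). The one genuine difference is in the a.e.\ convergence: the paper does not work with the family $\{T_k(u_p)\}_k$ and a diagonal over $k$, but instead tests the renormalized equation with $S_n(u_p)\varphi(u_p)$ for the single bounded strictly increasing function $\varphi(t)=t/(1+t)$, obtains $[\varphi(u_p)]_{W^{s,p}(\mathbb R^N)}^p\le\|f\|_{L^1(\Omega)}$ via Lemma~\ref{lem2.3}, and then applies Lemma~\ref{lem3.1} once to get compactness of $\varphi(u_p)$ in $L^1(\Omega)$; monotonicity of $\varphi$ then gives $u_p\to u$ a.e.\ directly. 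Your route is slightly more elementary in that it bypasses testing the equation and Lemma~\ref{lem2.3} altogether, relying only on Lemma~\ref{lem2.4}; the price is the extra diagonal-in-$k$ bookkeeping and the compatibility check $T_{k_1}(v_{k_2})=v_{k_1}$, whereas the paper's $\varphi$-trick handles all levels at once. Both are standard and equally valid here.

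Your closing worry about extracting one subsequence for all claims simultaneously is overstated: three successive extractions (one for a.e., one for weak $L^{q_0}(A_\Omega)$, one for weak $L^{q_0}(\Omega)$) already suffice, and on the finite-measure sets $A_\Omega$ and $\Omega$, weak $L^{q_0}$ convergence with $q_0>1$ immediately implies weak $L^1$ convergence (since $L^\infty\subset L^{q_0'}$), so no Dunford--Pettis argument is needed for $q=1$.
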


	\begin{proof}
		Define
		\begin{align*}
			S_n(t)=
			\begin{cases}
				1  &\text{if } t\le n,\\
				n-t+1 &\text{if }  n<t\le n+1,\\
				0 & \text{if } t>n
			\end{cases}
		\end{align*}
		and
		\begin{align*}
			\varphi(t)=\frac{t}{1+t}.
		\end{align*}
		We have
		\begin{align*}
			\varphi(u_p)\in W_0^{s,p}(\Omega)\cap L^{\infty}(\Omega).
		\end{align*}
		Therefore,
		\begin{align*}
			S_n(u_p)\varphi(u_p)\in W^{s,p}_0(\Omega)
		\end{align*}
		and
		\begin{align}
			\label{3.11}
				&\quad\int_{\mathcal{C}_\Omega}\frac{|u_{p}(x)-u_{p}(y)|^{p-2}\left( u_{p}(x)-u_{p}(y)\right)\left[ \left( S_n(u_{p})\varphi(u_p)\right) (x)-\left( S_n(u_{p})\varphi(u_p)\right) (y)\right]}{|x-y|^{N+sp}}dxdy\notag\\
				&=\int_{\Omega}fS_n(u_p)\varphi(u_p)dx.
		\end{align}
	
		Note that
		\begin{align*}
			\left( S_n(u_{p})\varphi(u_p)\right) (x)-\left( S_n(u_{p})\varphi(u_p)\right) (y)&=\left[ S_n(u_{p})(x)-S_n(u_{p})(y)\right]\frac{\varphi(u_p)(x)+\varphi(u_p)(y)}{2}\\
			&\ \quad+\left[ \varphi(u_p)(x)-\varphi(u_p)(y)\right] \frac{S_n(u_{p})(x)+S_n(u_{p})(y)}{2},
		\end{align*}
		we can divide  the left-hand side of \eqref{3.11} into two parts, by denoting
		\begin{align*}
			\begin{aligned}
				I_1=\int_{\mathcal{C}_\Omega}&\frac{|u_{p}(x)-u_{p}(y)|^{p-2}\left( u_{p}(x)-u_{p}(y)\right)\left[  S_n(u_{p}) (x)-S_n(u_{p})(y)\right]}{|x-y|^{N+sp}}\\
				&\times \frac{\varphi(u_p)(x)+\varphi(u_p)(y)}{2}dxdy
			\end{aligned}
		\end{align*}
		and
		\begin{align*}
			\begin{aligned}
				I_2=\int_{\mathcal{C}_\Omega}&\frac{|u_{p}(x)-u_{p}(y)|^{p-2}\left( u_{p}(x)-u_{p}(y)\right)\left[  \varphi(u_{p}) (x)-\varphi(u_{p})(y)\right]}{|x-y|^{N+sp}}\\
				&\times\frac{S_n(u_p)(x)+S_n(u_p)(y)}{2}dxdy.
			\end{aligned}
		\end{align*}
		By Lemma \ref{lem2.3} and $I_1+I_2\le \|f\|_{L^1(\Omega)}$, we get
		\begin{align*}
				\lim\limits_{n\rightarrow+\infty}I_2\le\|f\|_{L^1(\Omega)},
		\end{align*}
		that is
		\begin{align*}
			\int_{\mathcal{C}_\Omega}\frac{|u_{p}(x)-u_{p}(y)|^{p-2}\left( u_{p}(x)-u_{p}(y)\right)\left[  \varphi(u_{p}) (x)-\varphi(u_{p})(y)\right]}{|x-y|^{N+sp}} dxdy
			\le\|f\|_{L^1(\Omega)}.
		\end{align*}
		Since
		\begin{align*}
			\left| \varphi(u_p)(x)-\varphi(u_p)(y)\right| \le |u_p(x)-u_p(y)|
		\end{align*}
		and
		\begin{align*}
			\left( \varphi(u_p)(x)-\varphi(u_p)(y)\right) (u_p(x)-u_p(y))\ge 0,
		\end{align*}
		we arrive at
		\begin{align*}
			\int_{\mathcal{C}_\Omega}\frac{\left| \varphi(u_p)(x)-\varphi(u_p)(y)\right|^p}{|x-y|^{N+sp}}dxdy\le \|f\|_{L^1(\Omega)}.
		\end{align*}
		Taking into account $\left[ \varphi(u_p)\right]_{W^{s',1}(\mathbb{R}^N)} \le  C\left[ \varphi(u_p)\right]^p_{W^{s,p}(\mathbb{R}^N)}+C $ for any fixed $s'\in(0,s)$, we have $\left\lbrace \varphi(u_p)\right\rbrace $ is bounded in $W^{s',1}_0(\Omega)$. As a consequence, up to a subsequence, still denote by $\left\lbrace \varphi(u_p)\right\rbrace $,
		\begin{align*}
			\varphi(u_p)\rightarrow\varphi(u) \quad \text{in } L^1(\Omega)\text{ and a.e. in }\mathbb{R}^N.
		\end{align*}
		Recalling $\varphi(t)$ is strictly increasing, the sequence $\left\lbrace u_p\right\rbrace $ tends to a measurable function $u$ for almost all $x\in \Omega$. In particular, if $\varphi(u)=1$, we set $u=+\infty$.
		 
		Next, we try to find a function $Z$, defined on $\mathbb{R}^N\times\mathbb{R}^N$. Note that Proposition \ref{pro3.2} and Remark \ref{rem3.3} imply that
		\begin{align*}
			\left( 
			\int_{A_\Omega}\left( \frac{|u_p(x)-u_p(y)|^{p-1}}{|x-y|^{\left( \frac{N}{p}+s\right)(p-1) }}\right)^qdx 
			\right) ^\frac{1}{q}\le C,\quad q\in \left[ 1,\frac{N}{N-s}\right)   ,
		\end{align*}
		where $A_\Omega=(\Omega\times\Omega_0)\cup(\Omega_0\times\Omega)$ and $C$ depends only on $N,s,\Omega$. Thereby, up to a subsequence, still denote by
		\begin{align*}
			\left\lbrace 
			\frac{|u_p(x)-u_p(y)|^{p-2}(u_p(x)-u_p(y))}{|x-y|^{\left( \frac{N}{p}+s\right)(p-1) }}
			\right\rbrace ,
		\end{align*}
		we have
		\begin{align*}
			\frac{|u_p(x)-u_p(y)|^{p-2}(u_p(x)-u_p(y))}{|x-y|^{\left( \frac{N}{p}+s\right)(p-1) }}\rightharpoonup Z_q(x,y) \quad \text{in }L^q(A_\Omega).
		\end{align*}
		By a diagonal argument, we find the limit $Z_q$ does not depend on $q$. Hence, we write $Z_q$ as $\bar{Z}$, defined on $A_\Omega$.
		
		Similarly, we could find a function $\hat{Z}(x)$, such that for fixed $y\in \mathbb{R}^N\backslash\Omega_0$,
		\begin{align*}
			|u_p(x)|^{p-2}u_p(x)\rightharpoonup \hat{Z}(x) \quad \text{in } L^q(\Omega), \quad q\in \left[ 1,\frac{N}{N-s}\right),
		\end{align*}
		where we utilized $u_p\in L^{\frac{N(p-1)}{N-sp},\infty}(\Omega)$.
		
		By setting
		\begin{align*}
			Z(x,y)=\begin{cases}
				\bar{Z}(x,y) &\text{if }(x,y)\in A_\Omega,\\[1mm]
				\hat{Z}(x) &\text{if }(x,y)\in \Omega\times(\mathbb{R}^N\backslash\Omega_0),\\[1mm]
				-\hat{Z}(y) &\text{if }(x,y)\in (\mathbb{R}^N\backslash\Omega_0)\times\Omega,\\[1mm]
				0 &\text{if }(x,y)\in (\mathbb{R}^N\times\mathbb{R}^N)\backslash\mathcal{C}_\Omega,
			\end{cases}
		\end{align*}
		we then complete the proof.
	\end{proof} 
	\begin{remark}
		In the proof of Theorem \ref{the3.4}, since $p>1$ and $u_p\in L^{\frac{N(p-1)}{N-sp},\infty}(\Omega)$, we have $\left\lbrace u_p\right\rbrace $ is bounded in $L^{q}(\Omega)$ for all $1\le q<\frac{N}{N-s}<\frac{N}{N-sp}$.
	\end{remark}

\section{The limit problem}
\label{sec4}
In this section, we show the connection between the function $u,Z$ and the ``limit problem" of \eqref{2.1}, which is
	\begin{align}
		\label{4.1}
		\begin{cases}
			\mathcal{L}_1u=f &\text{in }\Omega,\\
			u=0 &\text{in }\mathbb{R}^N\backslash\Omega.
		\end{cases}
	\end{align}

\medskip

\noindent \textit{Proof of Theorem \ref{the1.2}.}
		We prove Theorem \ref{the1.2} by several steps. Some of the reasoning is based on the ideas developed in \cite{MST08,MY23}.
		
		\smallskip
		
		\textbf{Step 1.}  We shall prove that $T_k(u)\in W^{s,1}_0(\Omega)$ for any fixed $k>0$.
		
		For fixed $k>0$, we have
		\begin{align*}
			\left[ T_k(u_p)\right]_{W^{s,p}(\mathbb{R}^N)}^p\le k\|f\|_{L^1(\Omega)}.
		\end{align*}
		Note that $T_k(u_p)\rightarrow T_k(u) \text{ a.e. in }\Omega$ and from Fatou's lemma, there holds
		\begin{align*}
			[T_k(u)]_{W^{s,1}(\mathbb{R}^N)}\le \lim\limits_{p\rightarrow1^+}[T_k(u_p)]_{W^{s,p}(\mathbb{R}^N)}^p\le k\|f\|_{L^1(\Omega)},
		\end{align*}
		where Fatou's lemma is utilized. Moreover, $u_p=0$ a.e. in $\mathbb{R}^N\backslash\Omega$ implies that $u=0\text{ a.e. in }\mathbb{R}^N\backslash\Omega$. Therefore, it follows that $T_k(u)\in W^{s,1}_0(\Omega)$.
		
				\smallskip
				
		\textbf{Step 2.} We shall prove that $Z_k\in L^\infty(\mathbb{R}^N\times\mathbb{R}^N)$ and $\|Z_k\|_{L^\infty(\mathbb{R}^N\times\mathbb{R}^N)}\le 1$.
		
		For any fixed $k>0$, the sequence
		\begin{align*}
			\left\lbrace 
			\frac{|u_p(x)-u_p(y)|^{p-2}(u_p(x)-u_p(y))}{|x-y|^{\left( \frac{N}{p}+s\right)(p-1) }}\mathcal{X}_{\left\lbrace (x,y)\in A_\Omega:u_p(x)<k\text{ and }u_p(y)<k\right\rbrace }
			\right\rbrace 
		\end{align*}
		is bounded in $L^q(A_\Omega), q\in \left[ 1,\frac{N}{N-s}\right)  $. Thus, as $p\rightarrow1^+$, we have, up to a subsequence,
		\begin{align*}
			\frac{|u_p(x)-u_p(y)|^{p-2}(u_p(x)-u_p(y))}{|x-y|^{\left( \frac{N}{p}+s\right)(p-1) }}\mathcal{X}_{\left\lbrace (x,y)\in A_\Omega:u_p(x)<k\text{ and }u_p(y)<k\right\rbrace }\rightharpoonup w_k
		\end{align*}
		in $L^1(A_\Omega)$.
		
		Denote
		\begin{align*}
			B_{p,h,k}=\left\lbrace 
			(x,y)\in A_\Omega:\frac{|T_k(u_p)(x)-T_k(u_p)(y)|}{|x-y|^{\frac{N}{p}+s}}>h
			\right\rbrace.
		\end{align*}
		Similar to the same arguments as before, we also have
		\begin{align*}
			\frac{|u_p(x)-u_p(y)|^{p-2}(u_p(x)-u_p(y))}{|x-y|^{\left( \frac{N}{p}+s\right)(p-1) }}\mathcal{X}_{B_{p,h,k}\cap\left\lbrace (x,y)\in A_\Omega:u_p(x)<k\text{ and }u_p(y)<k\right\rbrace }\rightharpoonup g_{h,k}
		\end{align*}
		and
		\begin{align*}
			\frac{|u_p(x)-u_p(y)|^{p-2}(u_p(x)-u_p(y))}{|x-y|^{\left( \frac{N}{p}+s\right)(p-1) }}\mathcal{X}_{\left( A_\Omega\backslash B_{p,h,k}\right) \cap\left\lbrace (x,y)\in A_\Omega:u_p(x)<k\text{ and }u_p(y)<k\right\rbrace }\rightharpoonup f_{h,k}
		\end{align*}
		in $L^1(A_\Omega)$ for some $g_{h,k}$ and $f_{h,k}$.
		 
		By the definition of $B_{p,h,k}$, the following inequality holds,
		\begin{align*}
			\text{meas}\left(  B_{p,h,k}\right)  \le \frac{1}{h^p}\int_{A_\Omega}\frac{|T_k(u_p)(x)-T_k(u_p)(y)|^p}{|x-y|^{N+sp}}dxdy\le \frac{k\|f\|_{L^1(\Omega)}}{h^p}.
		\end{align*}
		Therefore, from H\"{o}lder's inequality, we have
		\begin{align}
			\label{4.3}
				&\quad\left| \int_{B_{p,h,k}\cap\left\lbrace (x,y)\in A_\Omega:u_p(x)<k\text{ and }u_p(y)<k\right\rbrace}\frac{|u_p(x)-u_p(y)|^{p-2}(u_p(x)-u_p(y))}{|x-y|^{\left( \frac{N}{p}+s\right)(p-1) }}dxdy\right| \notag\\
				&\le \left( \int_{A_\Omega}\left[ \frac{|T_k(u_p)(x)-T_k(u_p)(y)|^{p-2}\left( T_k(u_p)(x)-T_k(u_p)(y)\right) }{|x-y|^{\left( \frac{N}{p}+s\right)(p-1) }}\right]^\frac{p}{p-1}dxdy \right)^{\frac{p-1}{p}}\notag\\
				&\quad\ \times\left( \text{meas}\left\lbrace B_{p,h,k}\right\rbrace \right)^\frac{1}{p} \notag\\
				&\le \left( k\|f\|_{L^1(\Omega)}\right)^\frac{p-1}{p}\left( \frac{k\|f\|_{L^1(\Omega)}}{h^p}\right) ^\frac{1}{p}\le \frac{k\|f\|_{L^1(\Omega)}}{h^p}.
		\end{align}
		By letting $p$ goes to $1^+$, for fixed $k,h>0$, it follows from \eqref{4.3} 
		\begin{align*}
			\left| \int_{A_\Omega}g_{h,k}\phi dxdy\right|\le  \frac{k\|f\|_{L^1(\Omega)}}{h}
		\end{align*}
		for any $\phi\in L^\infty(A_\Omega)$ with $\|\phi\|_{L^\infty(A_\Omega)}\le 1$, hence we deduce
		\begin{align*}
			\int_{A_\Omega}\left| g_{h,k}\right|dxdy\le  \frac{k\|f\|_{L^1(\Omega)}}{h}.
		\end{align*}
		
		On the other hand, we have
		\begin{align*}
			\frac{|u_p(x)-u_p(y)|^{p-2}(u_p(x)-u_p(y))}{|x-y|^{\left( \frac{N}{p}+s\right)(p-1) }}\mathcal{X}_{\left( A_\Omega\backslash B_{p,h,k}\right) \cap\left\lbrace (x,y)\in A_\Omega:u_p(x)<k\text{ and }u_p(y)<k\right\rbrace }\le h^{p-1}
		\end{align*}
		a.e. in $A_\Omega$. This inequality implies the following pointwise estimate for $f_{h,k}$,
		\begin{align*}
			\left| f_{h,k}\right|\le \lim\limits_{p\rightarrow1}h^{p-1}=1\quad\text{a.e. in }A_\Omega. 
		\end{align*}
		From
		\begin{align*}
			w_k=f_{h,k}+g_{h,k}
		\end{align*}
		with
		\begin{align*}
			\|f_{h,k}\|_{L^\infty(A_\Omega)}\le 1,\quad \int_{A_\Omega}\left| g_{h,k}\right|dxdy\le \frac{k\|f\|_{L^1(\Omega)}}{h} ,
		\end{align*}
		we obtain
		\begin{align*}
			\lim\limits_{h\rightarrow+\infty}g_{h,k}=0
		\end{align*}
		and
		\begin{align*}
			\lim\limits_{h\rightarrow+\infty}f_{h,k}=\lim\limits_{h\rightarrow+\infty}(w_k-g_{h,k})=w_k.
		\end{align*}
		Since $\|f_{h,k}\|_{L^\infty(A_\Omega)}\le 1$, we obtain $w_k\in L^\infty(A_\Omega)$ and $\|w_k\|_{L^\infty(A_\Omega)}\le 1$. Next we show the connection between $w_k$ and $Z_k$. Taking $u_p\rightarrow u\text{ a.e. in }\Omega$ into account, we deduce
		\begin{align*}
			\mathcal{X}_{\left\lbrace (x,y)\in A_\Omega:u_p(x)<k\text{ and }u_p(y)<k\right\rbrace }\rightarrow\mathcal{X}_{\left\lbrace (x,y)\in A_\Omega\text{ and }u(y)<k\right\rbrace }
		\end{align*}
		in $L^\rho(A_\Omega)$ for all $\rho\in \left[ 1,+\infty\right) $. By means of $\text{meas}\left(  A_\Omega\right) <+\infty$, the set of $k$ such that
		\begin{align*}
			\text{meas}\left( \left\lbrace (x,y)\in A_\Omega:u(x)=k\text{ or }u(y)=k\right\rbrace\right) >0 
		\end{align*}
		is countable, which means, for almost all $k>0$, there holds $w_k=Z_k$. Observing that
		\begin{align*}
			\lim\limits_{k\rightarrow+\infty}w_k=\lim\limits_{k\rightarrow+\infty}Z_k=Z_\infty=Z\mathcal{X}_{\left\lbrace (x,y)\in A_\Omega:u(x)<+\infty\text{ and }u(y)<+\infty\right\rbrace }
		\end{align*}
		holds almost everywhere in $A_\Omega$, and combining with $\|w_k\|_{L^\infty(A_\Omega)}\le 1$, we have
		\begin{align*}
			\|Z_k\|_{L^\infty(A_\Omega)}\le \|Z_\infty\|_{L^\infty(A_\Omega)}\le 1
		\end{align*}
		for all $k>0$. Moreover, we have
		\begin{align}
			\label{4.4}
			Z_k\rightharpoonup Z_\infty\quad\text{in }L^q(A_\Omega)
		\end{align}
		for any $q\in [1,+\infty)$.
		
		Next, we study the property of $Z$ on $\mathcal{C}_\Omega\backslash A_\Omega$. From Theorem \ref{the3.4}, we know for any fixed $y\in \mathbb{R}^N\backslash\Omega_0$,
		\begin{align*}
			|u_p(\cdot)|^{p-2}u_p(\cdot)\rightharpoonup Z(\cdot,y)\quad\text{in }L^q(\Omega),\quad q\in \left[ 1,\frac{N}{N-s}\right) .
		\end{align*}
		Similar to the discussion of $Z$ in $A_\Omega$, we arrive at
		\begin{align*}
			Z_k(\cdot,y)\in L^\infty(\Omega),\quad \|Z_k(\cdot,y)\|_{L^\infty(\Omega)}\le 1
		\end{align*}
		and
		\begin{align}
			\label{4.5}
			Z_k(\cdot,y)\rightharpoonup Z_\infty(\cdot,y)\quad\text{in }L^q(\Omega)
		\end{align}
		for any $y\in\mathbb{R}^N\backslash\Omega_0$, $q\in [1,+\infty)$ and fixed $k>0$.
		Therefore,
		\begin{align*}
			Z_k(x,y)\in L^\infty\left( \mathcal{C}_\Omega\backslash A_\Omega\right) ,\quad \|Z_k(x,y)\|_{L^\infty\left( \mathcal{C}_\Omega\backslash A_\Omega\right)}\le 1.
		\end{align*}
	
			\smallskip
		\textbf{Step 3.} We shall prove that for fixed $k>0$, there holds
		\begin{align*}
			Z\left( u(x)-u(y)\right)=\left| u(x)-u(y)\right| \quad \text{a.e. in}  \left\lbrace (x,y)\in\mathbb{R}^N\times\mathbb{R}^N:u(x)<k\text{ and }u(y)<k\right\rbrace .
		\end{align*}
		
		Firstly, we discuss it in $A_\Omega$. By Step 2, $\|Z_k\|_{L^\infty(A_\Omega)}\le 1$, hence
		\begin{align*}
			Z_k(T_k(u)(x)-T_k(u)(y))\le |T_k(u)(x)-T_k(u)(y)|.
		\end{align*}
		Now we prove
		\begin{align*}
			|T_k(u)(x)-T_k(u)(y)|\le Z_k(T_k(u)(x)-T_k(u)(y)).
		\end{align*}
		For fixed $k>0$ and any subset $E\subset A_\Omega$ with $\text{meas}\left(   E\right) >0 $, because of
		\begin{align*}
			T_k(u_p)\rightarrow T_k(u)\quad \text{a.e. in }\Omega
		\end{align*}
		and
		\begin{align*}
			\|T_k(u_p)(x)-T_k(u_p)(y)\|_{L^\infty(A_\Omega)}\le 2k,
		\end{align*}
		we have
		\begin{align*}
			T_k(u_p)(x)-T_k(u_p)(y)\rightarrow T_k(u)(x)-T_k(u)(y)\quad \text{in }L^\rho(A_\Omega)
		\end{align*}
		for all $\rho\in \left[ 1,+\infty\right) $, by Fatou's lemma, we have
		\begin{align*}
			&\quad \int_{E\cap\left\lbrace (x,y)\in A_\Omega:u(x)<k\text{ and }u(y)<k\right\rbrace }|T_k(u)(x)-T_k(u)(y)|dxdy\\
			&\le \lim\limits_{p\rightarrow1}\int_{E\cap\left\lbrace (x,y)\in A_\Omega:u(x)<k\text{ and }u(y)<k\right\rbrace }\frac{|T_k(u_p)(x)-T_k(u_p)(y)|^p}{|x-y|^{\left( \frac{N}{p}+s\right)(p-1) }}dxdy\\
			&\le \lim\limits_{p\rightarrow1}\int_{E\cap\left\lbrace (x,y)\in A_\Omega:u(x)<k\text{ and }u(y)<k\right\rbrace }\frac{|u_p(x)-u_p(y)|^p\mathcal{X}_{\left\lbrace (x,y)\in A_\Omega:u_p(x)<k\text{ and }u_p(y)<k\right\rbrace }}{|x-y|^{\left( \frac{N}{p}+s\right)(p-1) }}dxdy\\
			&\quad+\lim\limits_{p\rightarrow1}\int_{E\cap\left\lbrace (x,y)\in A_\Omega:u(x)<k\text{ and }u(y)<k\right\rbrace }\frac{|u_p(x)-k|^p\mathcal{X}_{\left\lbrace (x,y)\in A_\Omega:u_p(x)<k\text{ and }u_p(y)\ge k\right\rbrace }}{|x-y|^{\left( \frac{N}{p}+s\right)(p-1) }}dxdy\\
			&\quad+\lim\limits_{p\rightarrow1}\int_{E\cap\left\lbrace (x,y)\in A_\Omega:u(x)<k\text{ and }u(y)<k\right\rbrace }\frac{|k-u_p(y)|^p\mathcal{X}_{\left\lbrace (x,y)\in A_\Omega:u_p(x)\ge k\text{ and }u_p(y)<k\right\rbrace }}{|x-y|^{\left( \frac{N}{p}+s\right)(p-1) }}dxdy.\\
		\end{align*}
		Since
		\begin{align*}
			&\mathcal{X}_{\left\lbrace (x,y)\in A_\Omega:u_p(x)<k\text{ and }u_p(y)<k\right\rbrace }\rightarrow\mathcal{X}_{\left\lbrace (x,y)\in A_\Omega:u(x)<k\text{ and }u(y)<k\right\rbrace },\\
			&\mathcal{X}_{\left\lbrace (x,y)\in A_\Omega:u_p(x)<k\text{ and }u_p(y)\ge k\right\rbrace}\rightarrow\mathcal{X}_{\left\lbrace (x,y)\in A_\Omega:u(x)<k\text{ and }u(y)\ge k\right\rbrace },\\
			&\mathcal{X}_{\left\lbrace (x,y)\in A_\Omega:u_p(x)\ge k\text{ and }u_p(y)<k\right\rbrace}\rightarrow\mathcal{X}_{\left\lbrace (x,y)\in A_\Omega:u(x)\ge k\text{ and }u(y)<k\right\rbrace }
		\end{align*}
		in $L^\rho(A_\Omega)$ for $\rho\in \left[ 1,+\infty\right) $, we have
		\begin{align*}
			&\quad \int_{E\cap\left\lbrace (x,y)\in A_\Omega:u(x)<k\text{ and }u(y)<k\right\rbrace }|u(x)-u(y)|dxdy\\
			&\le \int_{E\cap\left\lbrace (x,y)\in A_\Omega:u(x)<k\text{ and }u(y)<k\right\rbrace }Z\left( u(x)-u(y)\right)dxdy
		\end{align*}
		and complete the discussion in $A_\Omega$.
		
		Next, we discuss in $\mathcal{C}_\Omega\backslash A_\Omega$.	We have $\|Z_k\|_{L^\infty(\mathcal{C}_\Omega\backslash A_\Omega)}\le 1$, which implies
		\begin{align*}
			Z_k(T_k(u)(x)-T_k(u)(y))\le |T_k(u)(x)-T_k(u)(y)|.
		\end{align*}
		Therefore, it is enough for us to prove
		\begin{align*}
			|T_k(u)(x)-T_k(u)(y)|\le Z_k(T_k(u)(x)-T_k(u)(y))\quad\text{in}\ \mathcal{C}_\Omega\backslash A_\Omega,
		\end{align*}
		which is equivalent to
		\begin{align*}
			|T_k(u)(x)|\le Z_kT_k(u)(x)\quad\text{in }\Omega
		\end{align*}
		for fixed $y\in \mathbb{R}^N\backslash\Omega_0$ because of $u=0$ a.e. in $\mathbb{R}^N\backslash\Omega$.
		 
		Similar to the discussion in $A_\Omega$, we have, for any subset $E\subset\Omega$ with $\text{meas}\left(   E\right) >0 $,
		\begin{align*}
			&\quad\int_{E\cap\left\lbrace x\in\Omega:u(x)<k\right\rbrace }|u(x)|dx\\
			&\le \lim\limits_{p\rightarrow1}\int_{E\cap\left\lbrace x\in\Omega:u(x)<k\right\rbrace }|T_k(u_p)(x)|^pdx\\
			&\le \lim\limits_{p\rightarrow1}\int_{E\cap\left\lbrace x\in\Omega:u(x)<k\right\rbrace }|u_p(x)|^p\mathcal{X}_{\left\lbrace x\in\Omega:u_p(x)<k\right\rbrace }dx\\
			&\quad +\lim\limits_{p\rightarrow1}\int_{E\cap\left\lbrace x\in\Omega:u(x)<k\right\rbrace }k^p\mathcal{X}_{\left\lbrace x\in\Omega:u_p(x)\ge k\right\rbrace }dx\\
			&=\int_{E\cap\left\lbrace x\in\Omega:u(x)<k\right\rbrace }Zudx.
		\end{align*}
		Thus we arrive at (3) of the Definition \ref{def1.1}.
		
				\smallskip
				
		\textbf{Step 4.} We shall prove that $Z$ is an anti-symmetric function.
		
		We only prove that $Z$ is an anti-symmetric function in $A_\Omega$. From Theorem \ref{the3.4}, we have
		\begin{align*}
			\frac{|u_p(x)-u_p(y)|^{p-2}(u_p(x)-u_p(y))}{|x-y|^{\left( \frac{N}{p}+s\right)(p-1) }}\rightharpoonup Z(x,y) \quad \text{in }L^1(A_\Omega).
		\end{align*}
		On the other hand, exchange $x$ and $y$, there also holds
		\begin{align*}
			-\frac{|u_p(x)-u_p(y)|^{p-2}(u_p(x)-u_p(y))}{|x-y|^{\left( \frac{N}{p}+s\right)(p-1) }}\rightharpoonup -Z(x,y) \quad \text{in }L^1(A_\Omega)
		\end{align*}
		and
		\begin{align*}
			\frac{|u_p(y)-u_p(x)|^{p-2}(u_p(y)-u_p(x))}{|x-y|^{\left( \frac{N}{p}+s\right)(p-1) }}\rightharpoonup Z(y,x) \quad\text{in }L^1(A_\Omega),
		\end{align*}
		hence
		\begin{align*}
			Z(x,y)=-Z(y,x) \quad \text{in }A_\Omega
		\end{align*}
		because of the anti-symmetry of the function
		\begin{align*}
			\frac{|u_p(x)-u_p(y)|^{p-2}(u_p(x)-u_p(y))}{|x-y|^{\left( \frac{N}{p}+s\right)(p-1) }}.
		\end{align*}
	
			\smallskip
			
		\textbf{Step 5.} We shall prove that \eqref{4.2} holds under some suitable assumptions.
		
		We begin from the equality
		\begin{align}
			\label{4.6}
				&\quad\int_{\mathcal{C}_\Omega}\frac{|u_p(x)-u_p(y)|^{p-2}\left( u_p(x)-u_p(y)\right)\left[ \left( S(u_p)\varphi\right)(x)-\left( S(u_p)\varphi\right)(y) \right]  }{|x-y|^{N+sp}}dxdy\notag\\
				&=\int_{\Omega}fS(u_p)\varphi dx
		\end{align}
		with $S(t)\in W^{1,\infty}(\mathbb{R})$ is monotone in $(0,+\infty)$, having compact support and $\varphi\in W^{1,\infty}(\Omega)$, satisfying
		\begin{align*}
			S(u_p)\varphi\in W^{s,p}_0(\Omega),\ S'(t)\varphi(x)\ge 0\text{ for all }t\in [0,\infty)\text{ and }x\in \Omega.
		\end{align*}
		From $u_p\to u$ a.e. in $\Omega$, we have, the right-hand side of \eqref{4.6} satisfies
		\begin{align}
			\label{4.7}
			\lim\limits_{p\rightarrow1}\int_{\Omega}fS(u_p)\varphi dx=\int_{\Omega}fS(u)\varphi dx.
		\end{align}
		Next, we discuss the left-hand side of \eqref{4.6}. Set
		\begin{align*}
			I_1:=\int_{\mathcal{C}_\Omega}\frac{|u_p(x)-u_p(y)|^{p-2}\left( u_p(x)-u_p(y)\right)\left( \varphi(x)-\varphi(y) \right)   }{|x-y|^{N+sp}}\frac{S(u_p)(x)+S(u_p)(y)}{2}dxdy
		\end{align*}
		and
		\begin{align*}
			I_2:=\int_{\mathcal{C}_\Omega}\frac{|u_p(x)-u_p(y)|^{p-2}\left( u_p(x)-u_p(y)\right)\left( S(u_p)(x)-S(u_p)(y) \right)   }{|x-y|^{N+sp}}\frac{\varphi(x)+\varphi(y)}{2}dxdy.
		\end{align*}
		For $I_1$, assuming $\text{supp}\,S\subset[-k,k]$ and dividing $A_\Omega$ into the following nine subdomains,
		\begin{align*}
			&E_{1,p}=\left\lbrace (x,y)\in A_\Omega:u_p(x)<k \text{ and } u_p(y)<k\right\rbrace,\\
			&E_{2,p}=\left\lbrace (x,y)\in A_\Omega:u_p(x)<k \text{ and } u_p(y)\ge k+1\right\rbrace,\\
			&E_{3,p}=\left\lbrace (x,y)\in A_\Omega:u_p(x)<k \text{ and } k\le u_p(y)< k+1\right\rbrace,\\
			&E_{4,p}=\left\lbrace (x,y)\in A_\Omega:k\le u_p(x)<k+1 \text{ and } u_p(y)< k\right\rbrace,\\
			&E_{5,p}=\left\lbrace (x,y)\in A_\Omega:k\le u_p(x)<k+1 \text{ and } k\le u_p(y)< k+1\right\rbrace,\\
			&E_{6,p}=\left\lbrace (x,y)\in A_\Omega:k\le u_p(x)<k+1 \text{ and } u_p(y)\ge k+1\right\rbrace,\\
			&E_{7,p}=\left\lbrace (x,y)\in A_\Omega:u_p(x)\ge k+1 \text{ and } u_p(y)< k\right\rbrace,\\
			&E_{8,p}=\left\lbrace (x,y)\in A_\Omega:u_p(x)\ge k+1 \text{ and } k\le u_p(y)< k+1\right\rbrace,\\
			&E_{9,p}=\left\lbrace (x,y)\in A_\Omega:u_p(x)\ge k+1 \text{ and } u_p(y)\ge k+1\right\rbrace.\\
		\end{align*}
		
		Before discussing the limiting cases, we show that the function $Z_k$ could be got by the sequence
		\begin{align*}
			\left\lbrace 
			\frac{|u_p(x)-u_p(y)|^{p-2}\left( u_p(x)-u_p(y)\right)   }{|x-y|^{\left( \frac{N}{p}+s\right)(p-1) }}\mathcal{X}_{\left\lbrace (x,y)\in A_\Omega:u_p(x)<k\text{ and }u_p(y)<k\right\rbrace }
			\right\rbrace .
		\end{align*}
		By limiting $p<\frac{N}{N+s-1}$, we have
		\begin{align*}
			&\quad\int_{E_{1,p}}\left( 
			\frac{|u_p(x)-u_p(y)|^{p-2}\left( u_p(x)-u_p(y)\right)   }{|x-y|^{\left( \frac{N}{p}+s\right)(p-1) }}
			\right)^r dxdy\\
			&\le \text{meas}\left( \Omega_0\right)^{2\left( 1-\frac{(p-1)}{p}r\right) }\left( k\|f\|_{L^1(\Omega)}\right)^{\frac{p-1}{p}r} \\
			&\le \text{max}\left\lbrace \text{meas}\left( \Omega_0\right)^2,1\right\rbrace k^{\frac{p-1}{p}r} \|f\|_{L^1(\Omega)}^{\frac{p-1}{p}r}\\
			&\le C(\Omega,k)\|f\|_{L^1(\Omega)}^{\frac{p-1}{p}r}
		\end{align*}
		for fixed $p$ and any $r<\frac{p}{p-1}$. As $p$ goes to $1^+$, up to a subsequence, there holds
		\begin{align}
			\label{4.8}
			\frac{|u_p(x)-u_p(y)|^{p-2}\left( u_p(x)-u_p(y)\right)   }{|x-y|^{\left( \frac{N}{p}+s\right)(p-1) }}\mathcal{X}_{\left\lbrace (x,y)\in A_\Omega:u_p(x)<k\text{ and }u_p(y)<k\right\rbrace }\rightharpoonup Z_k
		\end{align}
		in $L^q(A_\Omega), q>\frac{N}{1-s}$. Moreover, for any $\rho\in \left[ 1,\frac{N}{N+s-1}\right)$, since
		\begin{align*}
			\frac{\varphi(x)-\varphi(y)}{|x-y|^{\frac{N}{p}+s}}\rightarrow\frac{\varphi(x)-\varphi(y)}{|x-y|^{N+s}}\quad\text{a.e. in }A_\Omega
		\end{align*}
		and
		\begin{align*}
			\left( \int_{A_\Omega}\left( \frac{\varphi(x)-\varphi(y)}{|x-y|^{\frac{N}{p}+s}}\right)^\rho dxdy \right)^\frac{1}{\rho}&\le \left[ \text{meas}\left(  \Omega\right)  \int_{0}^{l+1}\frac{C(\varphi)r^{N-1}}{r^{\left( \frac{N}{p}+s-1\right)\rho }}dr\right]^\frac{1}{\rho}\\
			&\le \left[ C(\varphi,\Omega,N)\frac{r^{N-\rho\left( \frac{N}{p}+s-1\right) }}{N-\rho\left( \frac{N}{p}+s-1\right) }\Bigg|_0^{l+1}\right]^\frac{1}{\rho}\\
			&\le C(\varphi,\Omega,N,s,\rho)(l+1)^{N+\rho}\\
			&\le C(\varphi,\Omega,N,s,\rho),
		\end{align*}
		where $l$ is the diameter of $\Omega$. It is easy to prove that
		\begin{align}
			\label{4.9}
			\frac{\varphi(x)-\varphi(y)}{|x-y|^{\frac{N}{p}+s}}\rightarrow\frac{\varphi(x)-\varphi(y)}{|x-y|^{N+s}}\quad\text{in }L^\rho(A_\Omega).
		\end{align}\par 
		(1) In $E_{1,p}$, according to \eqref{4.8}, \eqref{4.9} and $S(u_p)\to S(u)$ a.e.  in $\mathbb{R}^N$, we have
		\begin{align*}
			&\quad \lim\limits_{p\rightarrow1}\int_{E_{1,p}}\frac{|u_p(x)-u_p(y)|^{p-2}\left( u_p(x)-u_p(y)\right)\left( \varphi(x)-\varphi(y) \right)   }{|x-y|^{N+sp}}\frac{S(u_p)(x)+S(u_p)(y)}{2}dxdy\\
			&=\int_{A_\Omega}Z_k\frac{\varphi(x)-\varphi(y)}{|x-y|^{N+s}}\frac{S(u)(x)+S(u)(y)}{2}dxdy.
		\end{align*}
		
		(2) In $E_{2,p}$, by (2) in Definition \ref{def2.1},
		\begin{align}
			\label{4.10}
			\begin{aligned}
				\lim\limits_{k\rightarrow+\infty}\lim\limits_{p\rightarrow1}\int_{E_{2,p}}&\frac{|u_p(x)-u_p(y)|^{p-2}\left( u_p(x)-u_p(y)\right)\left( \varphi(x)-\varphi(y) \right)   }{|x-y|^{N+sp}}\\
				&\times\frac{S(u_p)(x)+S(u_p)(y)}{2}dxdy=0.
			\end{aligned}
		\end{align}
		By replacing $E_{2,p}$ with $E_{7,p}$, we have
		\begin{align*}
			\begin{aligned}
				\lim\limits_{k\rightarrow+\infty}\lim\limits_{p\rightarrow1}\int_{E_{7,p}}&\frac{|u_p(x)-u_p(y)|^{p-2}\left( u_p(x)-u_p(y)\right)\left( \varphi(x)-\varphi(y) \right)   }{|x-y|^{N+sp}}\\
				&\times\frac{S(u_p)(x)+S(u_p)(y)}{2}dxdy=0.
			\end{aligned}
		\end{align*}
		
		(3) Similar to (1), note that
		\begin{align*}
			E_{1,p}\cup E_{3,p}\cup E_{4,p}\cup E_{5,p}=\left\lbrace (x,y)\in A_\Omega:u_p(x)<k+1 \text{ and } u_p(y)<k+1\right\rbrace, 
		\end{align*}
		we have
		\begin{align*}
			\lim\limits_{p\rightarrow1}&\quad\int_{E_{1,p}\cup E_{3,p}\cup E_{4,p}\cup E_{5,p}}\frac{|u_p(x)-u_p(y)|^{p-2}\left( u_p(x)-u_p(y)\right)\left( \varphi(x)-\varphi(y) \right)   }{|x-y|^{N+sp}}\\
			&\qquad\qquad\qquad\qquad\qquad\times\frac{S(u_p)(x)+S(u_p)(y)}{2}dxdy\\
			&=\int_{A_\Omega}Z_{k+1}\frac{\varphi(x)-\varphi(y)}{|x-y|^{N+s}}\frac{S(u)(x)+S(u)(y)}{2}dxdy.
		\end{align*}
		
		(4) In $E_{6,p},E_{8,p}$ and $E_{9,p}$, because of $S(u_p)(x)=S(u_p)(y)=0$, we have
		\begin{align*}
			\int_{E_{6,p}\cup E_{8,p}\cup E_{9,p}}&\frac{|u_p(x)-u_p(y)|^{p-2}\left( u_p(x)-u_p(y)\right)\left( \varphi(x)-\varphi(y) \right)   }{|x-y|^{N+sp}}\\
			&\times\frac{S(u_p)(x)+S(u_p)(y)}{2}dxdy=0.
		\end{align*}
		
		To sum up,
		\begin{align}
			\label{4.11}
				&\quad\lim\limits_{p\rightarrow1}\int_{A_\Omega}\frac{|u_p(x)-u_p(y)|^{p-2}\left( u_p(x)-u_p(y)\right)\left( \varphi(x)-\varphi(y) \right)   }{|x-y|^{N+sp}}\frac{S(u_p)(x)+S(u_p)(y)}{2}dxdy\notag\\
				&=\lim\limits_{k\rightarrow+\infty}\lim\limits_{p\rightarrow1}\int_{A_\Omega}\frac{|u_p(x)-u_p(y)|^{p-2}\left( u_p(x)-u_p(y)\right)\left( \varphi(x)-\varphi(y) \right)   }{|x-y|^{N+sp}}\notag\\
				&\qquad\qquad\qquad\quad\quad\times\frac{S(u_p)(x)+S(u_p)(y)}{2}dxdy\notag\\
				&=\lim\limits_{k\rightarrow+\infty}\int_{A_\Omega}Z_{k+1}\frac{\varphi(x)-\varphi(y)}{|x-y|^{N+s}}\frac{S(u)(x)+S(u)(y)}{2}dxdy\notag\\
				&=\int_{A_\Omega}Z_{\infty}\frac{\varphi(x)-\varphi(y)}{|x-y|^{N+s}}\frac{S(u)(x)+S(u)(y)}{2}dxdy,
		\end{align}
		where we used \eqref{4.4} in the last line.
		
		As to $I_2$, we still study it in the domain $E_{i,p}(i\in \left\lbrace 1,2,\dots,9\right\rbrace )$, similar to (2) and (4) in the discussion of $I_1$, we have
		\begin{align}
			\label{4.12}
			\begin{aligned}
				\lim\limits_{k\rightarrow+\infty}\lim\limits_{p\rightarrow1}\int_{E_{2,p}\cup E_{7,p}}&\frac{|u_p(x)-u_p(y)|^{p-2}\left( u_p(x)-u_p(y)\right)\left( S(u_p)(x)-S(u_p)(y) \right)   }{|x-y|^{N+sp}}\\
				&\times\frac{\varphi(x)+\varphi(y)}{2}dxdy=0
			\end{aligned}
		\end{align}
		and
		\begin{align}
			\label{4.13}
			\begin{aligned}
				\lim\limits_{p\rightarrow1}\int_{E_{6,p}\cup E_{8,p}\cup E_{9,p}}&\frac{|u_p(x)-u_p(y)|^{p-2}\left( u_p(x)-u_p(y)\right)\left( S(u_p)(x)-S(u_p)(y) \right)   }{|x-y|^{N+sp}}\\
				&\times\frac{\varphi(x)+\varphi(y)}{2}dxdy=0.
			\end{aligned}
		\end{align}
		Finally, we deal with
		\begin{align*}
			\int_{E_{1,p}\cup E_{3,p}\cup E_{4,p}\cup E_{5,p}}&\frac{|u_p(x)-u_p(y)|^{p-2}\left( u_p(x)-u_p(y)\right)\left( S(u_p)(x)-S(u_p)(y) \right)   }{|x-y|^{N+sp}}\\
			&\times\frac{\varphi(x)+\varphi(y)}{2}dxdy.
		\end{align*}
		By mean-value theorem, observe that
		\begin{align*}
			S(u_p)(x)-S(u_p)(y)
		\end{align*}
		could be written as
		\begin{align*}
			S'\left( \theta(u_p(x),u_p(y))\right)(u_p(x)-u_p(y))
		\end{align*}
		with $S'\left( \theta(u_p(x),u_p(y))\right)\rightarrow S'\left( \theta(u(x),u(y))\right) \text{ a.e. in }\mathbb{R}^N\times\mathbb{R}^N$. By hypothesis of $S$ and $\varphi$, we have
		\begin{align*}
			&\quad\frac{|u_p(x)-u_p(y)|^{p-2}\left( u_p(x)-u_p(y)\right)\left( S(u_p)(x)-S(u_p)(y) \right)   }{|x-y|^{N+sp}}\frac{\varphi(x)+\varphi(y)}{2}\\
			&=\frac{|u_p(x)-u_p(y)|^{p}}{|x-y|^{N+sp}}S'\left( \theta(u_p(x),u_p(y))\right)\frac{\varphi(x)+\varphi(y)}{2} \ge 0.
		\end{align*}
		Therefore, using Fatou's lemma, we could verify that,
		\begin{align}
			\label{4.14}
				&\quad\int_{\left\lbrace (x,y)\in A_\Omega:u(x)<k+1 \text{ and } u(y)<k+1\right\rbrace}\frac{|u(x)-u(y)|}{|x-y|^{N+s}}S'\left( \theta(u(x),u(y))\right)\frac{\varphi(x)+\varphi(y)}{2}dxdy\notag\\
				&=\int_{A_\Omega}\lim\limits_{p\rightarrow1}\frac{|u_p(x)-u_p(y)|^p}{|x-y|^{N+sp}}\mathcal{X}_{\left\lbrace (x,y)\in A_\Omega:u_p(x)<k+1 \text{ and } u_p(y)<k+1\right\rbrace}S'\left( \theta(u_p(x),u_p(y))\right)\notag\\
				&\qquad\quad\times\frac{\varphi(x)+\varphi(y)}{2}dxdy\notag\\
				&\le \lim\limits_{p\rightarrow1}\int_{\left\lbrace (x,y)\in A_\Omega:u_p(x)<k+1 \text{ and } u_p(y)<k+1\right\rbrace}\frac{|u_p(x)-u_p(y)|^p}{|x-y|^{N+sp}}S'\left( \theta(u_p(x),u_p(y))\right)\notag\\
				&\qquad\quad\times\frac{\varphi(x)+\varphi(y)}{2}dxdy.
		\end{align}
		By Step 3, in the domain $\left\lbrace (x,y)\in A_\Omega:u(x)<k+1 \text{ and } u(y)<k+1\right\rbrace$,
		\begin{align*}
			|u(x)-u(y)|=Z_{k+1}(u(x)-u(y)),
		\end{align*}
		we have
		\begin{align}
			\label{4.15}
				&\quad\int_{\left\lbrace (x,y)\in A_\Omega:u(x)<k+1 \text{ and } u(y)<k+1\right\rbrace}\frac{|u(x)-u(y)|}{|x-y|^{N+s}}S'\left( \theta(u(x),u(y))\right)\frac{\varphi(x)+\varphi(y)}{2}dxdy\notag\\
				&=\int_{A_\Omega}Z_{k+1}\frac{u(x)-u(y)}{|x-y|^{N+s}}S'\left( \theta(u(x),u(y))\right)\frac{\varphi(x)+\varphi(y)}{2}dxdy\notag\\
				&=\int_{A_\Omega}Z_{k+1}\frac{S(u)(x)-S(u)(y)}{|x-y|^{N+s}}\frac{\varphi(x)+\varphi(y)}{2}dxdy.
		\end{align}
		Combining \eqref{4.12}--\eqref{4.15}, we conclude that
		\begin{align}
			\label{4.16}
				&\quad\int_{A_\Omega}Z_{\infty}\frac{S(u)(x)-S(u)(y)}{|x-y|^{N+s}}\frac{\varphi(x)+\varphi(y)}{2}dxdy\notag\\
				&=\lim\limits_{k\rightarrow+\infty}\int_{A_\Omega}Z_{k+1}\frac{S(u)(x)-S(u)(y)}{|x-y|^{N+s}}\frac{\varphi(x)+\varphi(y)}{2}dxdy\notag\\
				&\le \lim\limits_{k\rightarrow+\infty}\lim\limits_{p\rightarrow1}\int_{A_\Omega}\frac{|u_p(x)-u_p(y)|^{p}}{|x-y|^{N+sp}}S'\left( \theta(u_p(x),u_p(y))\right)\frac{\varphi(x)+\varphi(y)}{2}dxdy\notag\\
				&\le \lim\limits_{k\rightarrow+\infty}\lim\limits_{p\rightarrow1}\int_{A_\Omega}\frac{|u_p(x)-u_p(y)|^{p-2}\left( u_p(x)-u_p(y)\right)\left( S(u_p)(x)-S(u_p)(y) \right)   }{|x-y|^{N+sp}}\notag\\
				&\qquad\qquad\qquad\qquad\times\frac{\varphi(x)+\varphi(y)}{2}dxdy.
		\end{align}
		
		Finally, we consider
		\begin{align*}
			\int_{\Omega\times\left( \mathbb{R}^N\backslash\Omega_0\right) }\frac{|u_p(x)-u_p(y)|^{p-2}\left( u_p(x)-u_p(y)\right)\left[ \left( S(u_p)\varphi\right)(x)-\left( S(u_p)\varphi\right)(y) \right]  }{|x-y|^{N+sp}}dxdy,
		\end{align*}
		which could be written as
		\begin{align*}
			\int_{\Omega\times\left( \mathbb{R}^N\backslash\Omega_0\right) }\frac{|u_p(x)|^{p-2}u_p(x)\left( S(u_p)\varphi\right)(x) }{|x-y|^{N+sp}}dxdy.
		\end{align*}
		As $p$ goes to $1^+$, we have
		\begin{align}
			\label{4.17}
			\frac{1}{|x-y|^{N+sp}}\rightarrow\frac{1}{|x-y|^{N+s}}\quad \text{in } L^\rho\left( \mathbb{R}^N\backslash\Omega_0\right)
		\end{align}
		for any $\rho\in\left[ 1,+\infty\right)$ and $x\in\Omega$. Therefore,
		\begin{align*}
			&\quad\ \int_{\Omega\times\left( \mathbb{R}^N\backslash\Omega_0\right) }\frac{|u_p(x)|^{p-2}\left( u_p(x)\right)\left( S(u_p)\varphi\right)(x) }{|x-y|^{N+sp}}dxdy\\
			&=\int_{\Omega\times\left( \mathbb{R}^N\backslash\Omega_0\right) }\frac{|T_k(u_p)(x)|^{p-2}T_k(u_p)(x)\left( S(u_p)\varphi\right)(x)  }{|x-y|^{N+sp}}dxdy\\
			&=\int_{\Omega\times\left( \mathbb{R}^N\backslash\Omega_0\right) }|T_k(u_p)(x)|^{p-2}T_k(u_p)(x)\left( S(u_p)\varphi\right)(x)\\
			&\qquad\qquad\qquad\ \times\left( \frac{1}{|x-y|^{N+sp}}-\frac{1}{|x-y|^{N+s}}+\frac{1}{|x-y|^{N+s}}\right)dxdy\\
			&=I_3+I_4,
		\end{align*}
		where
		\begin{align*}
			&I_3=\int_{\Omega\times\left( \mathbb{R}^N\backslash\Omega_0\right) }|T_k(u_p)(x)|^{p-2}T_k(u_p)(x)\left( S(u_p)\varphi\right)(x)\left( \frac{1}{|x-y|^{N+sp}}-\frac{1}{|x-y|^{N+s}}\right)dxdy,\\
			&I_4=\int_{\Omega\times\left( \mathbb{R}^N\backslash\Omega_0\right) }|T_k(u_p)(x)|^{p-2}T_k(u_p)(x)\left( S(u_p)\varphi\right)(x)\frac{1}{|x-y|^{N+s}}dxdy.
		\end{align*}
		By \eqref{4.17}, we have
		\begin{align*}
			\lim\limits_{p\rightarrow1}I_3=0
		\end{align*}
		and
		\begin{align*}
			\lim\limits_{p\rightarrow1}I_4&=\lim\limits_{p\rightarrow1}\int_{\Omega\times\left( \mathbb{R}^N\backslash\Omega_0\right) }\frac{|u_p(x)|^{p-2}u_p(x)\left( S(u_p)\varphi\right)(x) }{|x-y|^{N+s}}dxdy\\
			&=\int_{\Omega\times\left( \mathbb{R}^N\backslash\Omega_0\right) }Z\frac{\left( S(u)\varphi\right)(x)}{|x-y|^{N+s}}dxdy\\
			&=\int_{\Omega\times\left( \mathbb{R}^N\backslash\Omega_0\right) }Z_k\frac{\left( S(u)\varphi\right)(x)}{|x-y|^{N+s}}dxdy.
		\end{align*}
		As a consequence,
		\begin{align}
			\label{4.18}
				&\quad\ \lim\limits_{p\rightarrow1}\int_{\Omega\times\left( \mathbb{R}^N\backslash\Omega_0\right) }\frac{|u_p(x)-u_p(y)|^{p-2}\left( u_p(x)-u_p(y)\right)\left[ \left( S(u_p)\varphi\right)(x)-\left( S(u_p)\varphi\right)(y) \right]  }{|x-y|^{N+sp}}dxdy\notag\\
				&=\lim\limits_{k\rightarrow+\infty}\lim\limits_{p\rightarrow1}\int_{\Omega\times\left( \mathbb{R}^N\backslash\Omega_0\right) }\frac{|u_p(x)-u_p(y)|^{p-2}\left( u_p(x)-u_p(y)\right)  }{|x-y|^{N+sp}}\notag\\
				&\qquad\qquad\qquad\qquad\qquad\quad\times\left[ \left( S(u_p)\varphi\right)(x)-\left( S(u_p)\varphi\right)(y) \right]dxdy\notag\\
				&=\lim\limits_{k\rightarrow+\infty}\int_{\Omega\times\left( \mathbb{R}^N\backslash\Omega_0\right) }Z_k\frac{\left( S(u)\varphi\right)(x) }{|x-y|^{N+s}}dxdy\notag\\
				&=\int_{\Omega\times\left( \mathbb{R}^N\backslash\Omega_0\right) }Z_\infty\frac{\left( S(u)\varphi\right)(x)-\left( S(u)\varphi\right)(y) }{|x-y|^{N+s}}dxdy.
		\end{align}
		Combining \eqref{4.7}, \eqref{4.11}, \eqref{4.16} and \eqref{4.18}, and using the fact that $Z$ is anti-symmetric, we deduce
		\begin{align*}
			\int_{\mathcal{C}_\Omega}Z_{\infty}\frac{\left( S(u)\varphi\right)(x)-\left(S(u)\varphi\right)(y) }{|x-y|^{N+s}}dxdy\le \int_{\Omega}fS(u)\varphi dx.
		\end{align*}
	This finished the proof. $\quad \Box$
	
	\medskip
	
\subsection*{Data availability}
No data was used for the research described in the article.
	
\subsection*{Acknowledgments}
This work was supported by the National Natural Science Foundation of China (No. 12071098) and the Fundamental Research Funds for the Central Universities (No. 2022FRFK060022).


\begin{thebibliography}{[a]}
		
		\bibitem{AAB19} B. Abdellaoui, A. Attar and R. Bentifour, On the fractional $p$-Laplacian equations with weight and general datum, Adv. Nonlinear Anal. 8 (2019) 144--174.
		
		\bibitem{AAB10} N. Alibaud, B. Andreianov and M. Bendahmane, Renormalized solutions of the fractional Laplace equations, C. R. Acad. Sci. Paris. Ser. I 348 (2010) 759--762.
		
		\bibitem{ABCM01} F. Andreu, C. Ballester, V. Caselles and J.M. Maz\'{o}n, The Dirichlet problem for the total variation flow, J. Funct. Anal. 180 (2) (2001) 347--403.
		
		\bibitem{ABCM00} F. Andreu, C. Ballester, V. Caselles and J.M. Maz\'{o}n, Minimizing total variation flow, C. R. Acad. Sci. Paria Sr. I Math. 331 (11) (2000) 867--872.
		
		\bibitem{ABCM012} F. Andreu, C. Ballester, V. Caselles and J.M. Maz\'{o}n, Minimizing total variation flow, Differ. Integr. Equ. 14 (3) (2001) 321--360.
		
		\bibitem{ACDM02} F. Andreu, V. Caselles, J.I. D\'{i}az and J.M. Maz\'{o}n, Some qualitative properties for the total variation flow, J. Funct. Anal. 188 (2) (2002) 516--547.
		
		\bibitem{ACM04} F. Andreu, V. Caselles and J.M. Maz\'{o}n, Parabolic quasilinear equations minimizing linear growth functionals: $L^1$-theory, Progress in Mathematics 233 (2004) 213--269.
		
		\bibitem{AMRT08} F. Andreu, J.M. Maz\'{o}n, J.D. Rossi and J. Toledo, A nonlocal $p$-Laplacian evolution equation with Neumann boundary conditions, J. Math. Pures Appl. 90 (9) (2008) 201--227.
		
		\bibitem{AMRT09} F. Andreu, J.M. Maz\'{o}n, J.D. Rossi and J. Toledo, A nonlocal $p$-Laplacian evolution equation with nonhomogeneous Dirichlet boundary conditions, SIAM J. Math. Anal. 40 (5) (2008) 1815--1851.
		
		\bibitem{An83} G. Anzellotti, Pairings between measures and bounded functions and compensated compactness, Ann. Math. Pura Appl. 135 (4) (1983) 293--318.
		
		\bibitem{BR18} A. Bahrouni and V. R\u{a}dulescu, On a new fractional Sobolev space and applications to nonlocal variational problems with variable exponent, Discrete Contin. Dyn. Syst. Ser. S 11 (3) (2018) 379--389.
		
		\bibitem {BM97} D. Blanchard and F. Murat, Renormalised solutions of nonlinear parabolic problems with $L^1$ data: Existence and uniqueness, Proc. Roy. Soc. Edinburgh Sect. A 127 (6) (1997) 1137--1152.
		
		\bibitem {BM01} D. Blanchard, F. Murat and H. Redwane, Existence and uniqueness of a renormalized solution for a fairly general class of nonlinear parabolic problems, J. Differential Equations 177 (2) (2001) 331--374.
		
		\bibitem {BR98} D. Blanchard and H. Redwane, Renormalized solutions for a class of nonlinear evolution problems, J. Math. Pure Appl. 77 (1998) 117--151.
		
		\bibitem{BG89} L. Boccardo and T. Gallou\"{e}t, Nonlinear elliptic and parabolic equations involving measure data, J. Funct. Anal. 87 (1989) 149--169.
		
		\bibitem{BG92} L. Boccardo and T. Gallou\"{e}t, Nonlinear elliptic equations with right hand side measures, Comm. Partial Differential Equations 17 (1992) 641--655.
		
		\bibitem{BGD93} L. Boccardo, D. Giachetti, J.I. Diaz and F. Murat, Existence and regularity of renormalized solutions for some elliptic problems involving derivations of nonlinear terms, J. Differential Equations 106  (1993) 215--237.
		
		\bibitem{BU23} C. Bucur, Solutions of the fractional 1-Laplacian: existence, asymptotics and flatness results, arXiv:2310.13656.
		
		\bibitem{BU21} C. Bucur, S. Dipierro, L. Lombardini, J.M. Maz\'{o}n and E. Valdinoci, $(s,p)$-harmonic approximation of functions of least $W^{s,1}$-seminorm, Int. Math. Res. Notices 2023 (2) (2021) 1173--1235.
		
		\bibitem{CF99} G. Chen and H. Frid, Divergence-measure fields and hyperbolic conservation laws, Arch. Ration. Mech. Anal. 147 (1999) 89--118.
		
		\bibitem{CT03} M. Cicalese and C. Trombetti, Asymptotic behaviour of solutions to $p$‐Laplacian equation, Asymptot. Anal. 35 (2003) 27--40.
		
		\bibitem{C17} M. Cozzi, Regularity results and Harnack inequalities for minimizers and solutions of nonlocal problems: A unified approach via fractional De Giorgi classes, J. Funct. Anal. 272 (2017) 4762--4837.
		
		\bibitem{DMOP99} G. Dal Maso, F. Murat, L. Orsina and A. Prignet, Renormalized solutions of elliptic equations with general measure data, Ann. Sc. Norm. Super. Pisa CI. Sci. 28 (4) (1999) 741--808.
		
		\bibitem{De99} F. Demengel, On some nonlinear partial differential equations involving the ``1"-Laplacian and critical Sobolev exponent, ESAIM Control Optim. Calc. Var. 4 (1999) 667--686.
		
		\bibitem{DPV12} E. Di Nezza, G. Palatucci and E. Valdinoci, Hitchhiker's guide to the fractional Sobolev spaces, Bull. Sci. Math. 136 (2012) 521--573.
		
		\bibitem{DL89} R.J. Diperna and P.L. Lions, On the cauchy problem for Boltzmann equations: Global existence and weak stability, Ann. of Math. 130 (1989) 321--366.
		
		\bibitem{KRB17} U. Kaufmann, J.D. Rossi and R. Bidel, Fractional Sobolev space with variable exponents and fractional $p(x)$-Laplacians, Electron. J. Qual. Theory Differ. Equ. 10 (2017) Paper No.76.
		
		\bibitem{Ka90} B. Kawohl, On a familiy of torsional creep problems, J. Reine Angew. Math. 410 (1990) 1--22.
		
		\bibitem{KPU11} K.H. Kenneth, F. Petitts and S. Ulusoy, A duality approach to the fractional Laplacian with measure data, Publ. Math. 55 (1) (2011) 151--161.
		
		\bibitem{KMS15} T. Kuusi, G. Mingione and Y. Sire, Nonlocal equations with measure data, Comm. Math. Phys. 377 (2015) 1317--1368.
		
		\bibitem{LS22} M. Latorre and S. Segura de Le\'{o}n, Existence and uniqueness for the inhomogeneous 1-Laplace evolution equation revisited, Rev. Real Acad. Cienc. Exactas Fis. Nat. Ser. A-Mat. (2022) 116--185.
		
		\bibitem{LPPS15} T. Leonori, I. Peral, A. Primo and F. Soria, Basic esimates for solutions of a class of nonlocal elliptic and parabolic equation, Discrete contin. Dyn. syst. 35 (12) (2015) 6031--6068.
		
		\bibitem{MY23} A. Matsoukas and N. Yannakakis, The double phase Dirichlet problem when the lowest exponent is equal to 1, J. Math. Anal. Appl. 526 (2023) 127270.
		
		\bibitem{MRT19}  J.M. Maz\'{o}n, J.D. Rossi and J. Toledo, Nonlocal perimeter, curvature and minimal surfaces for measurable sets, J. Anal. Math. 138 (1) (2019) 235--279.
		
		\bibitem{MST08} A. Mercaldo, S. Segura de Le\'{o}n and C. Trombetti, On the behaviour of the solutions to $p$-Laplacian equations as $p$ goes to 1, Publ. Mat. 52 (2008) 377--411.
		
		\bibitem{MST09} A. Mercaldo, S. Segura de Le\'{o}n and C. Trombetti, On the solutions to 1-Laplacian equation with $L^1$ data, J. Funct. Anal. 256 (8) (2009) 2387--2416.
		
		\bibitem{MF23} M. Novaga and F. Onoue, Local H\"{o}lder regularity of minimizers for nonlocal variational problems, Commun. Contemp. Math. 25 (10) (2023) 2250058.
		
		\bibitem{SW15} S. Segura de Le\'{o}n and C.M. Webler, Global existence and uniqueness for the inhomogeneous 1-Laplace evolution equation, Nonlinear Differ. Equ. Appl. 22 (2015) 1213--1246.
		
		\bibitem{ZZ20} C. Zhang and X. Zhang, Renormalized solutions for the fractional $p(x)$-Laplacian equation with $L^1$ data, Nonlinear Analysis 190 (2020) 111610.
		
		\bibitem{ZS10} C. Zhang and S. Zhou, Renormalized and entropy solutions for nonlinear parabolic equations with variable exponents and $L^1$ data, J. Differential Equations 248 (2010) 1376--1400.
		
	\end{thebibliography}
\end{document}